\newtheorem{theorem}{Theorem}[section]
\newtheorem{lemma}[theorem]{Lemma}
\newtheorem{corollary}[theorem]{Corollary}
\newtheorem{proposition}[theorem]{Proposition}
\theoremstyle{definition}
\newtheorem{definition}[theorem]{Definition}
\newtheorem{problem}[theorem]{Problem}
\newtheorem{remark}[theorem]{Remark}
\newtheorem*{thmA}{Theorem A}
\newtheorem*{thmB}{Theorem B}
\newtheorem*{thmC}{Theorem C}
\numberwithin{equation}{section}
\definecolor{OrangeRed}{cmyk}{0,0.6,1,0}            
\definecolor{DarkBlue}{cmyk}{1,1,0,0.20}
\definecolor{DarkGreen}{cmyk}{1,0,0.6,0.2}
\definecolor{myblue}{rgb}{0.66,0.78,1.00}
\definecolor{Violet}{cmyk}{0.79,0.88,0,0}
\definecolor{Lavender}{cmyk}{0,0.48,0,0}
\renewcommand{\epsilon}{\varepsilon}
\renewcommand{\phi}{\varphi}
\title{Orbits Inside Basins of Attraction of Skew Products }
\author{   
	John Erik Forn\ae ss,\\
	\small Department of Mathematical Sciences, Norwegian University of Science and Technology,\\
	\small Trondheim, 7034, Norway\\
	\small fornaess@gmail.com
	\and	Mi Hu \\
	\small Department of Mathematics, University of Oslo, Oslo, 0371, Norway\\
	\small humihqu@gmail.com\\
	\\
	\large		Im memory of Joe Kohn
}
\begin{document}

	\maketitle
	
	\begin{abstract}

		A basic problem in complex dynamics is to understand orbits
		of holomorphic maps. One problem is to understand the
		collection of points $S$ in an attracting basin whose forward orbits
		land exactly on the attracting fixed point. 
		In the paper \cite{RefH1},
		the second author showed that for holomorphic polynomials in $\mathbb C$, there is a constant $C$ so that all Kobayashi discs of radius $C$ must intersect this set $S$.
		In the paper \cite{RefH3}, the second author showed there are holomorphic skew products in $\mathbb C^2$ where this result fails. The main result of this paper is to show that for a large class of polynomial skew products, 
		this result nevertheless holds.
		
	\end{abstract}
	
		{\bf keywords: } {Basin of attraction, \and Kobayashi metric, \and Skew Products}        
	
	{\bf MRSubClass:} {32H50, 32F45}  
	
	\section{Introduction}
	
	Let $F: M\rightarrow M$ be a holomorphic selfmap of a complex manifold $M$. We assume that $F$ has an attracting fixed point
	$p.$ Let $\Omega$ denote the attracting basin consisting
	of those $q\in M$ for which $\{F^n(q)\}_{n=1}^\infty
	\rightarrow p$. We let $S$ denote the subset of those $q\in \Omega$ for which there exists an integer $n\geq 1$ so that
	$F^n(q)=p$. We know that if an orbit $\{z_n\}$ starting from $z_0$ eventually reaches the point $p$ for some integer $n$, then the behavior of the orbit $\{z_n\}$ is clear. 
	However, in most cases, the orbit $\{z_n\}$ of $z_0$ approaches $p$ indefinitely without ever reaching it. In such situations, when we want to know the precise behavior of the orbit of $z_0$, it becomes complicated. Then we ask whether we can investigate the backward orbit of the fixed point $p$ instead. Hence, no matter whether $M=\mathbb{C}, \hat{\mathbb{C}}=\mathbb{C}\cup\{\infty\}$, or $\mathbb{C}^2$, how to handle these situations becomes an interesting question. In this paper, we investigate the following problem:
	
	\begin{problem}\label{prob}
	Does there exist a constant $C$ such that for every point $z_0 \in \Omega$ there is a point $q \in S$ such that the Kobayashi distance between $z_0$ and $q$ is less than $C$? 
	
	\end{problem}
	
The reason why we use the Kobayashi metric is that the Kobayashi metric is an important tool in complex dynamics, see examples in \cite{RefA, RefBracci, RefBGN, RefBRS}. It has many good properties: it is distance-decreasing and invariant under holomorphic conjugation while the Euclidean distance is not. 

Furthermore, there are two important subsets of $\hat{\mathbb C}$, which are called the Fatou set and the Julia set, see references \cite{RefB, RefCG, RefFatou, RefM}. There have been many studies on probability measures that can describe the dynamics on the Julia set
For example, if $z$ is any non-exceptional point,  the inverse orbits $\{f^{-n}(z)\}$ equidistribute toward the Green measure $\mu$, which lives on the Julia set. This was already proved by Brolin \cite{RefBrolin} in 1965, and many improvements and generalizations have been made \cite{RefDS, RefDO}. However, this equidistribution toward $\mu$ is in the weak sense, and hence it is with respect to the Euclidean metric. Therefore, it is a reasonable question to ask how dense $\{f^{-n}(z)\}$ is in the Fatou set near the boundary of the Fatou set if we use finer metrics, for instance, the Kobayashi metric. See the definition of the Kobayashi metric in Section 2.

	In the paper \cite{RefH1}, Hu showed that the answer is affirmative for
	polynomials in $\mathbb C$. 
	She studied the dynamics of holomorphic polynomials on attracting basins and obtained Theorem A:
	
	\begin{thmA}\label{thmA} (Hu, \cite{RefH1})
		Suppose $f(z)$ is a polynomial of degree $N\geq 2$ on $\mathbb{C}$, $p$ is an attracting fixed point of $f(z),$ $\Omega_1$ is the immediate basin of attraction of $p$, $\{f^{-1}(p)\}\cap \Omega_1\neq\{p\}$,
		$\mathcal{A}(p)$ is the basin of attraction of $p$, $\Omega_i (i=1, 2, \cdots)$ are the connected components of $\mathcal{A}(p)$. Then there is a constant $\tilde{C}$ so that for every point $z_0$ inside any $\Omega_i$, there exists a point $q\in \cup_k f^{-k}(p)$ inside $\Omega_i$ such that $d_{\Omega_i}(z_0, q)\leq \tilde{C}$, where $d_{\Omega_i}$ is the Kobayashi distance on $\Omega_i.$  
	\end{thmA}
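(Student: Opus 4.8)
My plan is to pass to the disc model by uniformization and then prove a coarse density statement for the iterated preimages of the centre; the entire difficulty will be near the boundary, where the Kobayashi metric blows up.

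\emph{Set‑up and reduction.} Every $\Omega_i$ is a bounded, simply connected domain (for a polynomial the complement of a bounded Fatou component in $\hat{\mathbb C}$ is connected, being $\overline{A_\infty}$ together with the closures of the remaining components, all meeting $\overline{A_\infty}\supseteq J(f)$), so each $d_{\Omega_i}$ is a genuine, complete distance. It suffices to treat the immediate basin $\Omega_1$: for the other components one pulls a Kobayashi net back through the proper maps $f^{k_i}\colon\Omega_i\to\Omega_1$, which are local Kobayashi isometries off their finitely many critical values (lift paths through the covering $f^{k_i}\colon\Omega_i\setminus(f^{k_i})^{-1}(CV)\to\Omega_1\setminus CV$ and use monotonicity of the Kobayashi metric under puncturing), the orbits that must meet a critical value being handled by a compactness argument. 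Uniformize $\rho\colon\mathbb D\to\Omega_1$ with $\rho(0)=p$; then $d_{\Omega_1}=\rho_\ast d_{\mathbb D}$ and $g:=\rho^{-1}\!\circ f\circ\rho$ is a finite Blaschke product of degree $d\ge 2$ (since $\Omega_1$ contains a critical point of $f$) with $g(0)=0$, $|g'(0)|<1$; as every point of $\mathbb D$ is attracted to $0$, $\mathbb D$ is the full basin of $0$ for $g$ and $J(g)=\partial\mathbb D$. Under $\rho$ the set $\big(\bigcup_k f^{-k}(p)\big)\cap\Omega_1$ becomes $S':=\bigcup_{k\ge 0}g^{-k}(0)$, and the hypothesis $\{f^{-1}(p)\}\cap\Omega_1\neq\{p\}$ becomes $g^{-1}(0)\neq\{0\}$, i.e. $0$ is not exceptional for $g$. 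Crucially $S'$ is the zero set of the Koenigs linearizer of $g$ at $0$, hence a \emph{discrete} subset of $\mathbb D$ accumulating only on $\partial\mathbb D$ — so the statement genuinely concerns the Kobayashi metric, and fails if $S'=\{0\}$. The goal is $\sup_{w\in\mathbb D}d_{\mathbb D}(w,S')<\infty$.

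\emph{Where the work is.} Near $0$ this is trivial: every orbit enters the fixed hyperbolic ball $D(0,1)$ (because $g^n\to 0$ on $\mathbb D$) and each point of $D(0,1)$ is within $1$ of $0\in S'$. One would like to transport this back by $g^n$, but $g^n$ contracts strongly on the relevant preimage components, so it does not control $d_{\mathbb D}(w,S')$ for $w$ deep in $\mathbb D$. Instead I would estimate the density of $S'$ directly in the shell $\Sigma_T:=\{w:T\le d_{\mathbb D}(w,0)\le T+1\}$ for large $T$ — a region of hyperbolic area $\asymp e^{T}$, which must therefore carry $\gtrsim e^{T}$ well‑spread points of $S'$ for coarse density to hold. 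The points of $S'$ in $\Sigma_T$ are exactly the preimages $g^{-k}(0)$ whose $k$ forward steps accumulate roughly $T$ units of $\log|g'|$ along $\partial\mathbb D$. Since $0$ is not exceptional (the hypothesis), Brolin's theorem gives $d^{-k}\sum_{g^k(q)=0}\delta_q\to\mu$, the measure of maximal entropy, with $\mathrm{supp}\,\mu=J(g)=\partial\mathbb D$; turning this into a shell count, the number of preimages landing in $\Sigma_T$ is $\asymp e^{T\kappa}$ with $\kappa=\sup_\nu h(\nu)/\chi(\nu)$ over $g$‑invariant measures on $J$, and since $J(g)$ is the whole circle, $\kappa=\dim_H J(g)=1$. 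So $\Sigma_T$ carries $\gtrsim e^{T}$ points of $S'$ — the borderline amount — and, together with the even spreading coming from equidistribution and bounded distortion of inverse branches, this puts a point of $S'$ within a uniform $d_{\mathbb D}$‑distance of every $w$. With the trivial bound near $0$ and the reduction, $\sup_w d_{\mathbb D}(w,S')<\infty$, which is Theorem A.

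\emph{Main obstacle.} The delicate step is the quantitative, every‑scale version of the equidistribution near $\partial\mathbb D$: one needs the preimages in $\Sigma_T$ not merely numerous but uniformly spread, which rests on bounded‑distortion control of inverse branches of $g^k$. This is routine when $g$ is uniformly expanding on $\partial\mathbb D$ (hyperbolic case), but $g$ may only be semi‑hyperbolic — it can carry a critical point on $\partial\mathbb D$ — and then the estimates must be run through thermodynamic formalism or a Koebe‑type argument adapted to the parabolic‑free but non‑uniformly expanding boundary dynamics. I expect essentially all of the difficulty to sit there; the reduction to $\Omega_1$ and the treatment of orbits hitting critical values are comparatively routine.
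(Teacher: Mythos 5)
Your reduction steps are sound and agree with what is done in the cited source \cite{RefH1}: restrict to the immediate basin, uniformize $\Omega_1\cong\mathbb D$ so that $f$ becomes a finite Blaschke product $g$ of degree $\ge 2$ fixing $0$ with $|g'(0)|<1$, and observe that the statement becomes the claim that $S'=\bigcup_k g^{-k}(0)$ is coarsely dense for $d_{\mathbb D}$. The handling of the non‑immediate components $\Omega_i$ by lifting through the proper maps $f^{k_i}\colon\Omega_i\to\Omega_1$ (covers off finitely many branch points, giving a bounded distortion of the Kobayashi distance) is also the right idea.

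Where you diverge, and where the gap is, is the central density estimate. You attack it with Brolin equidistribution, a shell count $\#\big(S'\cap\Sigma_T\big)\asymp e^{T}$ obtained from an entropy/Lyapunov supremum, and ``even spreading from bounded distortion.'' Two comments. First, this is exactly the step you concede is not actually carried out (``I expect essentially all of the difficulty to sit there''), and it is not a routine upgrade: Brolin's theorem gives weak-$*$ convergence, i.e.\ density in the Euclidean/spherical sense, which near $\partial\mathbb D$ is much coarser than density in $d_{\mathbb D}$ — the paper's introduction makes precisely this point, which is why the theorem is nontrivial at all. So invoking Brolin here is closer to restating the problem than to solving it; one still has to produce a preimage of $0$ inside \emph{every} fixed-radius hyperbolic ball, and the shell count by itself ($\gtrsim e^T$ points in a region of hyperbolic area $\asymp e^T$) does not exclude clustering. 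Second, the obstruction you flag — that $g$ ``may only be semi-hyperbolic, carrying a critical point on $\partial\mathbb D$'' — does not occur: a finite Blaschke product never has critical points on the unit circle (compute $zg'/g=\sum_j(1-|a_j|^2)/|1-\bar a_j z|^2>0$ on $|z|=1$), and since in our setting $\mathbb D$ is the full attracting basin of $0$, every critical orbit of $g$ (inside and, by reflection, outside) converges to an attracting fixed point, so $g$ is hyperbolic and $J(g)=\partial\mathbb D$. Thus the scenario you single out as the ``main obstacle'' is vacuous.

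For comparison, the argument underlying Theorem~A in \cite{RefH1} (and echoed in the two-dimensional Lemma~3.7 of the present paper) is more elementary and avoids equidistribution entirely: choose a compact, forward-invariant set $K\Subset\mathbb D$ containing $0$ and all critical points of $g$, so that $K_n:=g^{-n}(K)$ increases to $\mathbb D$ and each $g\colon\mathbb D\setminus K_{n+1}\to\mathbb D\setminus K_n$ is an \emph{unbranched} covering, hence a local isometry between the corresponding punctured Kobayashi metrics; since $K_{n+1}\supset K_n$, the map is actually expanding for $d_{\mathbb D\setminus K_n}$ on $\mathbb D\setminus K_{n+1}$. If arbitrarily large Kobayashi discs avoided $S'\cup\{0\}$, one could push such a disc forward without losing radius until its center lies in a fixed compact neighborhood of $0$, contradicting the fact that $S'\cup\{0\}$ is a discrete set with $0$ as a member. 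This is a self-contained compactness/covering argument, and it is what makes the constant $\tilde C$ uniform. Your proposal could in principle be made to work once one knows $g$ is hyperbolic (which, as noted, it always is here), but it substitutes substantially heavier machinery for what is ultimately a soft covering-space argument, and in its present form the key quantitative step is asserted rather than proved.
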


	Theorem A shows that in an attracting basin of a complex polynomial, the backward orbit of the attracting fixed point either is the point itself or accumulates at the boundary of all the components of the basin in such a way that all points of the basin lie within a uniformly bounded distance of the backward orbit, measured with respect to the Kobayashi metric. This is an interesting and innovative problem and result. There are no other publications by other researchers.
	In the paper \cite{RefFH}, the authors showed that the same result also holds for all rational functions on $\hat{\mathbb C}$.
	However, they found in the same paper that there are entire functions on $\mathbb C$ for which the answer to the question is negative, and in the paper \cite{RefH2}, the analogous result does not hold for parabolic basins.
	
	 In the paper \cite{RefH3}, Hu investigated the situation in dimension two
	 , i.e., $M\subset\mathbb C^2$.
	Compared with one-dimensional cases, there are very interesting results about dynamics inside attracting basins. 
	This involves delicate estimates of the Kobayashi metric on domains in $\mathbb C^2$.

	Firstly, Hu obtained similar results on $\mathbb C^2$ as Theorem A on $\mathbb C$. 
	
	\begin{thmB}[Hu, \cite{RefH3}]
		Suppose $F(z, w)=(P(z), Q(w)),$ where $P(z), Q(w)$ be two polynomials of degree $m_1, m_2\geq2$ on $\mathbb{C},$
		$P(0)=0, Q(0)=0, 0<|P'(0)|, |Q'(0)|<1.$  
		Let $\Omega$ be an immediate attracting basin of $F(z, w)$. 
		Then there is a constant $C$ such that for every point $(z_0, w_0)\in \Omega$, there exists a point $(\tilde{z}, \tilde{w})\in \cup_k F^{-k}(0, 0), k\geq0$ so that $d_\Omega\big((z_0, w_0), (\tilde{z}, \tilde{w})\big)\leq C,  d_\Omega$ is the Kobayashi distance on $\Omega$. 
	\end{thmB}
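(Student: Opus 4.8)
The plan is to exploit the fact that $F$ is a genuine product map, so that Theorem B becomes the ``tensorization'' of Theorem A: the product structure decouples the problem into two copies of the one-variable situation, and the Kobayashi distance of a product domain is the maximum of the two factor distances.

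\emph{Step 1 (identify $\Omega$).} Since $F(z,w)=(P(z),Q(w))$, a point $(z,w)$ lies in the basin of $(0,0)$ if and only if $P^n(z)\to0$ and $Q^n(w)\to0$, so the full basin of $(0,0)$ is $\mathcal{A}_P(0)\times\mathcal{A}_Q(0)$, where $\mathcal{A}_P(0),\mathcal{A}_Q(0)$ denote the one-variable basins of $0$ under $P$ and $Q$. Because the connected components of a product of plane open sets are the products of the components of the factors, the immediate basin — the component containing $(0,0)$ — is
\[
	\Omega=\Omega_1^P\times\Omega_1^Q,
\]
with $\Omega_1^P\ni0$ and $\Omega_1^Q\ni0$ the one-variable immediate basins of $0$ for $P$ and $Q$. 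As $\deg P,\deg Q\geq2$ these are hyperbolic plane domains (their complements contain the nonempty open basin of $\infty$). The hypothesis $0<|P'(0)|,|Q'(0)|<1$ enters here: $0$ being attracting but not superattracting, the classical theorem that an attracting immediate basin carries a critical point shows that the proper self-maps $P|_{\Omega_1^P}$ and $Q|_{\Omega_1^Q}$ (Fatou components map properly onto Fatou components) have degree $\geq2$, and since $0$ is a simple point of these maps ($P'(0),Q'(0)\neq0$) the fibres $P^{-1}(0)\cap\Omega_1^P$ and $Q^{-1}(0)\cap\Omega_1^Q$ strictly contain $\{0\}$. Hence the hypotheses of Theorem A are satisfied for $P$ on $\Omega_1^P$ and for $Q$ on $\Omega_1^Q$; let $\tilde{C}_P$ and $\tilde{C}_Q$ be the resulting constants.

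\emph{Step 2 (Kobayashi product formula).} I would then invoke the standard identity, valid for hyperbolic domains $D_1,D_2$,
\[
	d_{D_1\times D_2}\big((x_1,y_1),(x_2,y_2)\big)=\max\big\{\,d_{D_1}(x_1,x_2),\ d_{D_2}(y_1,y_2)\,\big\},
\]
whose ``$\geq$'' half comes from the two coordinate projections and whose ``$\leq$'' half from concatenating analytic discs in the two factors. Together with Step 1 this reduces the assertion to the two one-dimensional statements already provided by Theorem A.

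\emph{Step 3 (assemble the point and the bound).} Given $(z_0,w_0)\in\Omega$, apply Theorem A to $P$ to obtain $\tilde{z}\in P^{-k_1}(0)\cap\Omega_1^P$ with $d_{\Omega_1^P}(z_0,\tilde{z})\leq\tilde{C}_P$, and to $Q$ to obtain $\tilde{w}\in Q^{-k_2}(0)\cap\Omega_1^Q$ with $d_{\Omega_1^Q}(w_0,\tilde{w})\leq\tilde{C}_Q$. Since $0$ is fixed, the sets $P^{-j}(0)$ and $Q^{-j}(0)$ increase with $j$, so with $k:=\max\{k_1,k_2\}$ we get $\tilde{z}\in P^{-k}(0)$ and $\tilde{w}\in Q^{-k}(0)$; hence $(\tilde{z},\tilde{w})\in P^{-k}(0)\times Q^{-k}(0)=F^{-k}(0,0)$ and $(\tilde{z},\tilde{w})\in\Omega_1^P\times\Omega_1^Q=\Omega$. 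The product formula of Step 2 then yields
\[
	d_\Omega\big((z_0,w_0),(\tilde{z},\tilde{w})\big)=\max\big\{\,d_{\Omega_1^P}(z_0,\tilde{z}),\ d_{\Omega_1^Q}(w_0,\tilde{w})\,\big\}\leq\max\{\tilde{C}_P,\tilde{C}_Q\}=:C,
\]
which is the desired uniform bound. The only ingredients with genuine content are the verification in Step 1 that each one-variable immediate basin contains a nontrivial preimage of $0$ — the one place the ``degree $\geq2$, not superattracting'' hypotheses are used — and the Kobayashi product formula of Step 2; both are standard, so I do not expect a real obstacle. In short, Theorem B is Theorem A applied coordinatewise, with the two constants combined by a maximum.
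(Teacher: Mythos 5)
Theorem~B is cited in this paper from \cite{RefH3} without a proof, so there is no in-paper argument to compare against; I will assess your proposal on its own terms. Your argument is correct. Step~1 correctly identifies $\Omega=\Omega_1^P\times\Omega_1^Q$, and the verification that the hypothesis $\{P^{-1}(0)\}\cap\Omega_1^P\neq\{0\}$ (resp.\ for $Q$) of Theorem~A is met is sound: by Fatou's theorem the immediate basin of a geometrically attracting fixed point of a polynomial of degree $\geq2$ contains a critical point, so the proper self-map $P|_{\Omega_1^P}$ has degree at least $2$, and since $P'(0)\neq0$ the fibre over $0$ contains a point other than $0$. Step~3's bookkeeping ($P^{-j}(0)$ increasing in $j$ because $P(0)=0$, so a common $k$ exists with $(\tilde z,\tilde w)\in F^{-k}(0,0)$) is fine.

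One small remark: Step~2 invokes the full Kobayashi product identity $d_{D_1\times D_2}=\max\{d_{D_1},d_{D_2}\}$. That is a true and standard result, but it is slightly heavier machinery than you actually need, and the ``$\leq$'' direction (the one you use) is the nontrivial half. A more elementary route that avoids it entirely: fix $w_0$ and use the holomorphic inclusion $z\mapsto(z,w_0)$ of $\Omega_1^P$ into $\Omega$, which is distance-decreasing, to get $d_\Omega\bigl((z_0,w_0),(\tilde z,w_0)\bigr)\leq d_{\Omega_1^P}(z_0,\tilde z)\leq\tilde C_P$; then similarly $d_\Omega\bigl((\tilde z,w_0),(\tilde z,\tilde w)\bigr)\leq\tilde C_Q$ via $w\mapsto(\tilde z,w)$; the triangle inequality gives $d_\Omega\bigl((z_0,w_0),(\tilde z,\tilde w)\bigr)\leq\tilde C_P+\tilde C_Q$. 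This yields a sum rather than a max, which is all that is required, and it relies only on the trivial distance-decreasing property. Either way your proof is complete; given that the product structure is the whole content of Theorem~B, the coordinatewise reduction to Theorem~A is the natural route and almost certainly what \cite{RefH3} does.
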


	However, Hu also showed Theorem B is not valid for some other cases of $F(z, w)=(P(z), Q(w))$. For example, $P(z)=z^2, Q(w)=w^2.$ For more cases, we refer to the paper \cite{RefH3}. 
	
	In addition, she not only considered $F(z, w)=(P(z), Q(w))$ but also studied 
	 polynomial skew products, i.e. 
	 maps of the form $F(z,w)=(P(z), Q(z,w))$ which have been investigated by many authors. Their great advantage is that one can obtain a lot of information from using the theory of iteration
	for holomorphic polynomials $P(z)$ in one variable.
In the paper \cite{RefH3}, she showed that there are
	skew products for which the answer to the problem is negative.
	In fact, there is the following counterexample:
	
		Suppose $F$ is a polynomial skew product, $F(z, w)=(P(z), Q(z, w)).$ 
	Theorem B fails in the following cases:
	
	1. $P(z)=z^2, Q(z, w)=w^2+az, a\in\mathbb{C};$ 
	
	2. $P(z)=az+z^2, Q(z, w)=w^2+cw+bz, 0<|a|, |b|, |c|<<1$ and $ |a|>>|c|, |a|>>|b|, |c|>>|ab|$.
	
	The proof used very involved estimates of the Kobayashi metric
	on the basin of attraction. Observe that such basins are usually fractal and the Kobayashi metric is very complicated.
	
	The goal of this paper is to investigate a fairly general criterion which will
	suffice to give an affirmative answer to Problem \ref{prob} for $M=\mathbb{C}^2$. This will apply to a large class of maps. We also show by example how such a criterion can be verified.  We also show why the counterexample of the second case above fails to satisfy the criterion. 
	
		In this paper, we will define a class  $\mathcal A$ of skew products in Section 2. 	Our main result is the following theorem.
		\begin{thmC}
			Suppose that $F\in \mathcal A$.
			Then there exists a constant $C$ such that for every point $(z,w)\in \Omega$ there is a point $(z_0,w_0)$ in $\Omega$ and an integer $N$ such that
			the Kobayashi distance between $(z,w)$ and $(z_0,w_0)$  in $\Omega$
			is at most $C$ and $F^N(z_0,w_0)=(0,0).$
		\end{thmC}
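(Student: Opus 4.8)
The plan is to reduce the two‑dimensional statement to the one‑dimensional Theorem~A applied to the base polynomial $P(z)$, and then to propagate the bound into the fiber direction using the structure of the class $\mathcal A$. First I would use the projection $\pi(z,w)=z$, which semiconjugates $F$ to $P$, i.e. $P\circ\pi=\pi\circ F$. Since $\pi$ is holomorphic from $\Omega$ into the basin $\mathcal A(0)$ of $P$ in $\mathbb C$, it is distance‑decreasing for the Kobayashi metrics. Given $(z,w)\in\Omega$, Theorem~A produces a point $z'$ in the appropriate component of $\mathcal A(0)$ with $d(z,z')\le\tilde C$ and $P^k(z')=0$ for some $k$. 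The key point of the class $\mathcal A$ should be a condition guaranteeing that over such preimages $z'$ of $0$ the fiber dynamics is, in a uniform way, ``as contracting as'' the model map — so that once the base coordinate has been brought to $0$, only a bounded amount of Kobayashi distance is needed to also bring the fiber coordinate to $0$. That is, over the fiber $\{z=0\}$ the map $w\mapsto Q(0,w)$ has $0$ as an attracting fixed point whose basin contains, within bounded Kobayashi distance, a preimage of $0$ (using Theorem~A again in one variable, or the defining hypotheses of $\mathcal A$).

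The key steps, in order, would be: (1) establish that $\pi:\Omega\to\mathcal A_P(0)$ is well defined, holomorphic, and surjective onto the relevant union of components, and record the distance‑decreasing property $d_{\mathcal A_P(0)}(\pi(x),\pi(y))\le d_\Omega(x,y)$ together with the reverse‑type estimate that one can lift a short path in the base to a short path in $\Omega$ staying in a controlled region — this is where the hypotheses defining $\mathcal A$ (presumably some uniform fiberwise hyperbolicity or a uniform bound on the fiber Kobayashi metric, plus properness of $\pi$ over a neighborhood of the postcritical set of $P$) will be used; (2) apply Theorem~A to $z$ to get $z'$ with $P^k(z')=0$ and $d(z,z')\le\tilde C$; (3) choose a point $(z'',w'')\in\Omega$ with $\pi(z'',w'')=z'$ realizing (up to an additive constant) the base distance, so $d_\Omega((z,w),(z'',w''))\le \tilde C + C_1$; (4) now $F^k(z'',w'')$ lies over $0$, i.e. in the slice $\Omega\cap\{z=0\}$, and $F^k$ does not increase Kobayashi distance, so it suffices to handle points of this slice; (5) on the slice, $F$ restricts to (a map conjugate to) the one‑variable map $w\mapsto Q(0,w)$ near its attracting fixed point $0$, and a further application of Theorem~A in one variable, combined with the fact (again part of the definition of $\mathcal A$) that the Kobayashi metric of $\Omega$ restricted to this slice is comparable to the intrinsic one‑variable metric, yields a point $w_0$ in the slice with $d(\,\cdot\,,(0,w_0))$ bounded and $(Q(0,\cdot))^{\ell}(w_0)=0$; (6) assemble: pull $(0,w_0)$ back by $F^{-k}$ if necessary, or simply set $N=k+\ell$ and $(z_0,w_0)$ the point just constructed, to get $F^N(z_0,w_0)=(0,0)$ and $d_\Omega((z,w),(z_0,w_0))\le \tilde C + C_1 + C_2 =: C$.

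The main obstacle I anticipate is step~(4)–(5): even though $F^k$ is distance‑decreasing, once we are in the slice $\{z=0\}$ we need a \emph{lower}-type control, namely that a Kobayashi‑short path inside $\Omega$ connecting $(0,w_0)$ to our point can be arranged — equivalently, that the inclusion of the fiber slice (or a tube around it) into $\Omega$ does not collapse distances too badly, and that the one‑variable basin $\{w:(0,w)\in\Omega\}$ genuinely sees a preimage of $0$. This is exactly the phenomenon that fails in the counterexamples of \cite{RefH3}, so the definition of $\mathcal A$ must be engineered to rule it out — presumably by requiring $\{f^{-1}(0)\}\cap\Omega_1\neq\{0\}$ in the base together with a uniform fiberwise hyperbolicity/expansion hypothesis ensuring the fiber over a suitable preimage of $0$ contains a preimage of $0$ within bounded distance, and a comparability estimate between $d_\Omega$ near $\{z=0\}$ and a product‑type metric. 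I would therefore spend most of the write‑up on making these uniform estimates precise (properness of $\pi$ on compact parts of the postcritical set, uniform upper and lower bounds for the fiber Kobayashi metric over that region), and comparatively little on the bookkeeping of the triangle inequality in step~(6); verifying the criterion on the worked example, and checking that the second counterexample of \cite{RefH3} violates it, would be handled separately after Theorem~C is proved.
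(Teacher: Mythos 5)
Your proposal takes a genuinely different route from the paper, and unfortunately it has a gap at the point you yourself flag as the ``main obstacle'' --- and the gap actually appears earlier, already at step (3), not only at steps (4)--(5). The distance-decreasing property of the projection $\pi(z,w)=z$ gives the inequality $d_U(z,z')\le d_\Omega((z,w),(z'',w''))$, which goes the \emph{wrong way}: knowing $d_U(z,z')\le\tilde C$ from Theorem~A gives no upper bound whatsoever on $d_\Omega$ from $(z,w)$ to any point of the fiber over $z'$. To get such a bound you would have to lift a near-geodesic in $U$ to a path in $\Omega$ of comparable Kobayashi length, which requires controlling both the horizontal component of $F_\Omega$ (which dominates, but need not be comparable to, $F_U$) and keeping the $w$-coordinate inside the varying slices $\Omega_{\gamma(t)}$. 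Your proposal recognizes this and tries to rescue it by invoking hypotheses such as ``uniform fiberwise hyperbolicity'', ``properness of $\pi$ over the postcritical set'', and ``comparability of $d_\Omega$ near $\{z=0\}$ to a product-type metric'' --- but none of these appear in the actual definition of $\mathcal A$. The real defining condition $\mathcal C$ is about the absence of $\partial Q/\partial w$-critical points over $\partial U$ (plus mild extra assumptions on critical orbits), and it is used for something else entirely. There is also a bookkeeping confusion in step (6): the point constructed at step (3) lies over $z'$ but is not itself a preimage of $(0,0)$; you cannot ``simply set $N=k+\ell$'' without an additional argument connecting the fiber adjustment of step (5) back to the $(z'',w'')$-coordinate, which again needs a lifting estimate.

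The paper's proof does not project to the base and lift back; it works entirely within the slices $\Omega_z=\{w:(z,w)\in\Omega\}$. The slice maps $w\mapsto Q(z,w)$ from $\Omega_z$ to $\Omega_{P(z)}$ are degree-$d$ branched covers, and condition $\mathcal C$ (no $w$-critical points over $\partial U$, hence over a neighborhood of $\partial U$) guarantees that along any orbit $z,P(z),P^2(z),\dots$ only a uniformly bounded number of these slice maps actually have branch points. Using the strong stable curve $\Sigma$ through the origin and its preimages $\Sigma_n$ (graphs $w=f_n(z)$ over $|z|<\epsilon$), the paper shows by a normal-families/contradiction argument that no slice $\Omega_z$ with $|z|<\epsilon$ contains a Kobayashi disc of radius $C_1$ avoiding all $f_n(z)$; this is propagated backwards along the orbit, with the constant degrading only at the finitely many branched steps, giving the global constant $C_2$. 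Thus $(z,w)$ is within bounded \emph{slice} Kobayashi distance of a point $(z,\eta)$ lying on the preimage variety $\Lambda=\bigcup F^{-n}(\Sigma)$, and the final step bounds the Kobayashi distance \emph{along the Riemann surface} $\Lambda$ from $(z,\eta)$ to an actual preimage of $(0,0)$. Since the slice metric and the intrinsic metric on $\Lambda$ both dominate $d_\Omega$, the bound descends to $\Omega$. In short: the paper never needs the problematic upward lifting of base geodesics, because it always moves $w$ inside a fixed slice (or along the invariant curve $\Lambda$), where the required comparison with $d_\Omega$ is automatic. If you wish to salvage your outline, you would need to replace steps (3)--(5) with an argument of that kind, or else prove the lifting estimate from the stated hypotheses --- and the latter does not follow from them.
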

		 In Section 3, we will prove Theorem C, i.e., Theorem \ref{Mainthm}. 
		 In Section 4, we give an example to show that the class $\mathcal{A}$ is non-empty. 
	
	\section{Preliminaries}
	
	\subsection{The Kobayashi metric}
	For the convenience of the reader, we recall the definition of the Kobayashi metric.
	
	\begin{definition}\label{def1.2.23}
		Let $M$ be a complex manifold. We choose a point $z\in M$ and a tangent vector $\xi$ to $M$ at the point $z.$ Let $\triangle$ denote the unit disk in the complex plane.
		We define the {\bf Kobayashi metric} \cite{RefK}
		$$
		F_M(z, \xi):=\inf\{\lambda>0 : \exists f: \triangle\stackrel{hol}{\longrightarrow} M, f(0)=z, \lambda f'(0)=\xi\}.
		$$

		Let $\gamma: [0, 1]\rightarrow M$ be a piecewise smooth curve.
		The {\bf Kobayashi length} of $\gamma$ is defined to be 
		$$ L_{M} (\gamma)=\int_{\gamma} F_{M}(z, \xi) \lvert dz\rvert=\int_{0}^{1}F_{M}\big(\gamma(t), \gamma'(t)\big)\lvert \gamma'(t)\rvert dt.$$

		For any two points $z_1$ and $z_2$ in $M$, the {\bf Kobayashi distance} between $z_1$ and $z_2$ is defined to be 
		$$d_{M}(z_1, z_2)=\inf\{L_{M} (\gamma): \gamma ~ \text{is a piecewise smooth curve connecting} ~z_1~ \text{and} ~z_2 \}.$$

		Note that $d_{M}(z_1, z_2)$ is defined when $z_1, z_2$ are in the same connected component of $M.$

		
	\end{definition}

		\subsection{A class $\mathcal A$ of skew products}

	We will investigate polynomial skew products of the form
	$F(z,w)=(P(z), Q(z,w))$. We will make several hypotheses. 
	Many of them are just for convenience to reduce the technicalities in the proof. It will be clear to the interested reader that they
	can be relaxed.
	
	We make a basic hypothesis on $P:$
	\begin{itemize}
		\item $P(0)=0$;
		\item $0<|P'(0)|=a<1;$
		\item The degree $d$ of $P$ is at least 2.
		\end{itemize}

	Let $U$ denote the basin of attraction of $P$ and let
	$U_0$ denote the connected component of $U$ containing the origin and let $U_j,j>0$ denote the other connected components of $U$.
	Recall from the above Theorem \ref{thmA} that there is a constant $C$ such that 
	if $z\in U_j$ then there is a  preimage $z_0=P^{-N}(0)$
	in $U_j$ such that the Kobayashi distance in $U_j$ from
	$z$ to $z_0$ is at most $C.$

	We also make basic hypotheses on $Q.$
	We can write $Q(z,w)=\sum_{j=0}^d Q_j(w)z^j.$ 
	\begin{itemize}
		\item The degree of $Q_j, j>1$ is at most $d-1$;
		\item $Q_0(w):=Q(0,w)$ is a polynomial of degree $d$;
		\item $Q_0(0)=0$;
		\item $0<|Q'_0(0)|=b<1$;
		\item $a<b$;
		\item $\frac{\partial Q}{\partial z}(0,0)=0.$
	\end{itemize}
	
	The conditions of $P,Q$ imply that $(0,0)$ is an attracting fixed point. Let $\Omega$ be the basin of attraction, let
	$\Omega^0$ denote the immediate basin of attraction and let
	$\Omega^j, j>0,$ denote the other connected components of the basin (if there are any).
	
	For any $z\in \mathbb C,$ we let $\Omega_z=\{w; (z,w)\in \Omega\}$ be the slice of $\Omega$ at $z$.

	The first item in the requirements of $Q$  implies that the basin of attraction,  $\Omega$,
	is a bounded set which will be convenient.
	
Next we make some conditions on the critical set. The most important condition is in the first item. The others can be relaxed and are there to simplify technicalities.

	
	\begin{definition}\label{def2}
		We say that $F$ satisfies condition $\mathcal{C}$ if the following holds:
		\begin{itemize}
			\item Let $z\in\partial U$ be any point. Then there is no point $(z,w)
			\in \overline{\Omega}$ for which $\frac{\partial Q}{\partial w}=0.$
			\item Let $w\notin \Omega_0$ and 
			$\frac{\partial Q}{\partial w}(0,w)= 0,$ then $(0,w)$ is in the basin of infinity of $Q_0.$
			\item Suppose that $w\in \Omega_0$ and $\frac{\partial Q}{\partial w}= 0$. Then $F^n(0,w)\neq (0, 0)$ for all positive
			integers  $n.$
			\item If $z\in U$ and $\frac{\partial P}{\partial z}=0$
			then $P^n(z)\neq 0$ for all positive integers $n$.
		\end{itemize}
		
	 We denote by $\mathcal A$ the class of maps satisfying all the above conditions, including condition $\mathcal{C}$.
		
	\end{definition}
	
	\section{The dynamics of skew products of class $\mathcal A$}
	
		In this section, we will discuss the dynamics of skew products of class $\mathcal A$ and present our main result.
	
	\begin{theorem}\label{Mainthm}
		Suppose that $F\in \mathcal A$.
		Then there exists a constant $C$ such that for every point $(z,w)\in \Omega$ there is a point $(z_0,w_0)$ in the same Fatou component  $\Omega^j$ as $(z,w)$ and an integer $N$ such that
		the Kobayashi distance between $(z,w)$ and $(z_0,w_0)$  in $\Omega^j$
	is at most $C$ and $F^N(z_0,w_0)=(0,0).$
	\end{theorem}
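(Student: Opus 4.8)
\medskip\noindent The plan is to deduce Theorem~\ref{Mainthm} from Theorem~A applied twice: once to the base polynomial $P$ on $U$, and once to the one-variable polynomial $Q_0=Q(0,\cdot)$ on the invariant fibre $\{0\}\times\mathbb C$, with the two applications glued through the fibration of $\Omega$ over $U$. Fix $(z,w)\in\Omega^j$ and let $z\in U_k$.

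\emph{Stage 1 (reducing the base coordinate).} By Theorem~A applied to $P$ there are an integer $M\ge 0$, a point $z^\ast\in U_k$ with $P^M(z^\ast)=0$, and a curve $\gamma$ in $U_k$ from $z$ to $z^\ast$ of Kobayashi length at most a constant depending only on $F$. The heart of this stage is to lift $\gamma$ to a curve in $\Omega^j$ starting at $(z,w)$ whose Kobayashi length in $\Omega^j$ is again uniformly bounded. Here $\Omega^j$ fibres over $U_k$ with one-dimensional fibres, which are uniformly bounded because $\Omega$ is (by the first item in the hypotheses on $Q$), and the first item of condition $\mathcal{C}$ — no point of $\overline\Omega$ over $\partial U$ is a critical point of $w\mapsto Q(z,w)$ — is exactly what prevents these fibres from degenerating as $\gamma(t)$ runs close to $\partial U$. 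Lifting $\gamma$ horizontally and estimating the vertical cost, one gets $(z^\ast,w^\ast)\in\Omega^j$ with $d_{\Omega^j}\bigl((z,w),(z^\ast,w^\ast)\bigr)\le C_1$. Applying $F^M$ yields $F^M(z^\ast,w^\ast)=(0,v)$ with $v\in\Omega_0$; note $\Omega_0=\{u:(0,u)\in\Omega\}$ is precisely the basin of attraction of $0$ for $Q_0$, and Theorem~A does apply to $Q_0$, since $Q_0'(0)=b\neq 0$ and, by Fatou's theorem, the immediate basin of $0$ for $Q_0$ contains a critical point, so that immediate basin meets $Q_0^{-1}(0)$ in a point other than $0$.

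\emph{Stage 2 (reducing the fibre coordinate on the invariant fibre).} Apply Theorem~A to $Q_0$ at $v$: there are a point $v'$ in the component $\mathcal O$ of $\Omega_0$ containing $v$, an integer $N_0\ge 0$ with $Q_0^{N_0}(v')=0$, and a curve $\sigma$ from $v$ to $v'$ in $\mathcal O$ of bounded Kobayashi length. Let $q(w):=$ (second coordinate of $F^M(z^\ast,w)$), the composed fibre polynomial of degree at most $d^M$ along the orbit $z^\ast\mapsto P(z^\ast)\mapsto\cdots\mapsto 0$; it maps the component $\mathcal O'$ of $\Omega^j_{z^\ast}$ containing $w^\ast$ properly onto $\mathcal O$, and $q(w^\ast)=v$. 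Lifting $\sigma$ through $q|_{\mathcal O'}$ starting at $w^\ast$ gives $w'\in\mathcal O'$ with $q(w')=v'$, so $(z_0,w_0):=(z^\ast,w')\in\Omega^j$ and $F^{M+N_0}(z_0,w_0)=(0,0)$. The role of items two through four of condition $\mathcal{C}$ is to force tameness of the pertinent critical orbits — no critical point of $P$ in $U$, and no critical point of a fibre map $w\mapsto Q(z,w)$ lying over $U$ inside $\Omega$, has a forward orbit through $(0,0)$, and the escaping critical points never return — from which one deduces that the branch locus of $q$ stays a definite Kobayashi distance from the backward orbit of $0$, hence from $\sigma$; this is what keeps the lift of $\sigma$ short, and composing with the distance-decreasing inclusion $\{z^\ast\}\times\Omega^j_{z^\ast}\hookrightarrow\Omega^j$ then bounds $d_{\Omega^j}\bigl((z^\ast,w^\ast),(z_0,w_0)\bigr)$ by a constant $C_2$. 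Taking $C:=C_1+C_2$ finishes the proof.

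\emph{The main obstacle} is the uniform control of Kobayashi length under lifting through the branched coverings appearing in both stages — the fibration $\Omega^j\to U_k$ in Stage~1, the composed fibre polynomial $q$ in Stage~2. Lifting a curve through a branch point can increase its Kobayashi length without bound (already for $z\mapsto z^2$ on the unit disc, near the origin), so one must show that the geodesics produced by Theorem~A keep a definite Kobayashi distance from the critical values, uniformly over all fibres and all depths $M$. On these fractal basins the Kobayashi metric is subtle, and securing this uniformity is where condition $\mathcal{C}$ is used in an essential way; it is also precisely what the skew-product counterexample of the second type in the introduction fails to satisfy.
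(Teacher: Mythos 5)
Your proposal is a genuinely different route from the paper's. The paper never lifts a base curve: its key step (Lemma~3.8) stays entirely inside a single slice $\Omega_z$, using the chain of fibre maps $\Omega_z\to\Omega_{P(z)}\to\cdots\to\Omega_{P^N(z)}$ and their distance-decreasing property to find, \emph{within} $\Omega_z$, a point $(z,\eta)$ of bounded slice-Kobayashi distance from $(z,w)$ that lies on the globally defined invariant branched curve $\Lambda=\bigcup_n F^{-n}(\Lambda_0)$. The move in the $z$-direction is then carried out not by Theorem~A in the base but by transporting along $\Lambda$ itself, using that $F$ restricted to $\Lambda$ is locally a Kobayashi isometry away from the finitely many branch levels. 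Your Stage~2 (lifting a Theorem~A path on $\Omega_0$ backward through the composed fibre polynomial $q$, with the branch set controlled by condition $\mathcal C$) is close in spirit to the paper's Lemma~3.8; that part of your scheme is sound and is where the second, third and fourth items of $\mathcal C$ and the bound $\tilde M$ on branched slice maps genuinely enter.

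The gap is in Stage~1. You assert that the Theorem~A curve $\gamma\subset U_k$ from $z$ to a preimage $z^\ast$ of $0$ can be ``lifted horizontally'' to a curve in $\Omega^j$ from $(z,w)$ to some $(z^\ast,w^\ast)$ of Kobayashi length $\le C_1$, but no such lift is constructed and no bound is proved. A skew product has no canonical horizontal distribution: to lift $\gamma$ you must specify $w(t)$, and the naive choice $w(t)\equiv w$ need not stay inside $\Omega$. More seriously, the projection $\pi:(z,w)\mapsto z$ from $\Omega^j$ to $U_k$ is Kobayashi-distance-decreasing, so \emph{any} curve in $\Omega^j$ projecting to $\gamma$ has Kobayashi length at least $L_{U_k}(\gamma)$; this gives a lower bound of the right order, but it is the \emph{upper} bound that is needed and no mechanism for it is supplied. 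The first item of condition $\mathcal C$ forbids critical points of $w\mapsto Q(z,w)$ in $\overline\Omega$ over $\partial U$ — it controls where the fibre maps branch, not whether a horizontal lift has bounded length; the fibres $\Omega_z$ can still pinch or drift as $z$ ranges over $\gamma$, and the Kobayashi metric of $\Omega^j$ in near-horizontal directions can be much larger than that of $U_k$. Until you either (a) produce a specific lift together with an upper estimate for its length, or (b) replace Stage~1 by a slice-only reduction as in the paper's Lemma~3.8 (which sidesteps the lifting problem entirely by finding the nearby point over the \emph{same} $z$ and only afterward moving $z$ along the invariant curve), the argument does not close.
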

	
	The conclusion of Theorem \ref{Mainthm} fails for some $F$. As mentioned above there is a counterexample in the paper \cite{RefH3}, which is $F(z, w)=(P(z), Q(z, w)), P(z)=az+z^2, Q(z, w)=w^2+cw+bz, 0<|a|, |b|, |c|<<1$ and $ |a|>>|c|, |a|>>|b|, |c|>>|ab|$. This example does not belong to the class $\mathcal A$ since it fails the first item of condition $\mathcal{C}$ in Definition \ref{def2}. Furthermore, we will also give an explicit example in the next section to show that the class $\mathcal{A}$ is non-empty. 
	Therefore, we give an affirmative answer to Problem \ref{prob} for many skew products.

	\begin{lemma}
	The set $\Omega_z$ is open and uniformly bounded. Moreover,
	the set is nonempty if and only if $z\in U$.
	\end{lemma}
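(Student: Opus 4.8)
The plan is to handle the three assertions separately; the first and last are essentially formal, and the real content lies in uniform boundedness. For openness, observe that since $(0,0)$ is an attracting fixed point of $F$, there is a polydisc $\Delta=\{|z|<\delta\}\times\{|w|<\delta\}$ with $F(\overline{\Delta})\subseteq\Delta$, hence $\Delta\subseteq\Omega$; then $\Omega=\bigcup_{n\geq 0}F^{-n}(\Delta)$ is open because $F$ is continuous. The slice $\Omega_z$ is the preimage of the open set $\Omega$ under the holomorphic embedding $w\mapsto(z,w)$, so it is open in $\mathbb{C}$.

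For the equivalence, write $\pi(\Omega)=\{z:\Omega_z\neq\emptyset\}$. If $(z,w)\in\Omega$, the first coordinate of $F^n(z,w)$ equals $P^n(z)\to 0$, so $z\in U$, giving $\pi(\Omega)\subseteq U$. For the reverse inclusion I will use that the basin is completely invariant, $F^{-1}(\Omega)=\Omega$, and that for each fixed $z$ the fiber map $w\mapsto Q(z,w)$ is a polynomial of degree exactly $d\geq 2$ — this is where the hypotheses on the degrees of the $Q_j$ are used, since they force the coefficient of $w^d$ in $Q(z,w)$ to be a nonzero constant — hence surjective onto $\mathbb{C}$. Given $z'\in\pi(\Omega)$ and any $z$ with $P(z)=z'$, surjectivity yields $w$ with $Q(z,w)\in\Omega_{z'}$, i.e. $F(z,w)\in\Omega$, so $(z,w)\in\Omega$ by complete invariance; therefore $P^{-1}(\pi(\Omega))\subseteq\pi(\Omega)$. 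Since $\Delta\subseteq\Omega$ gives $\{|z|<\delta\}\subseteq\pi(\Omega)$, and since a point lies in $U$ exactly when some forward iterate under $P$ enters $\{|z|<\delta\}$, iterating the inclusion yields $U\subseteq\pi(\Omega)$, hence $\pi(\Omega)=U$.

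For uniform boundedness, note first that $U$ is bounded: it lies in the filled Julia set of $P$, which is bounded since $|P(z)|\geq 2|z|$ for $|z|$ large; fix $R_1$ with $U\subseteq\{|z|\leq R_1\}$. Write $Q(z,w)=c\,w^d+(\text{lower order in }w)$, where $c\neq 0$ is constant and the remaining coefficients are polynomials in $z$, hence bounded by some $M$ on $\{|z|\leq R_1\}$; then $|Q(z,w)|\geq |c|\,|w|^d-dM\,|w|^{d-1}$ for $|z|\leq R_1$ and $|w|\geq 1$, and since $d\geq 2$ we may choose $R_2\geq 1$ so large that $|Q(z,w)|\geq 2|w|$ whenever $|z|\leq R_1$, $|w|\geq R_2$. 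If $(z,w)\in\Omega$, then $z\in\pi(\Omega)=U$ and, $U$ being forward invariant under $P$, the iterates $(z_n,w_n)=F^n(z,w)$ satisfy $|z_n|\leq R_1$ for all $n$; were $|w|\geq R_2$, the escape estimate would force $|w_n|\geq 2^n|w|\to\infty$, contradicting $w_n\to 0$. Hence $|w|<R_2$ for every $(z,w)\in\Omega$, so $\Omega_z\subseteq\{|w|<R_2\}$ with $R_2$ independent of $z$. The one genuine obstacle is this escape estimate: it needs the leading coefficient of $Q$ in $w$ to be a nonzero constant, so that $Q$ outgrows $|w|$ uniformly as $z$ ranges over the bounded set $U$ — precisely the role of the degree hypotheses on the $Q_j$; everything else is formal, using complete invariance of $\Omega$ and surjectivity of the fiber polynomials.
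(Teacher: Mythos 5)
Your proof is correct and follows the same outline as the paper's: boundedness via an escape estimate coming from the degree hypotheses on the $Q_j$, emptiness of $\Omega_z$ for $z\notin U$ because the first coordinate of $F^n(z,w)$ is $P^n(z)$, and non-emptiness for $z\in U$ by pulling a point on the forward orbit back over $z$ using surjectivity of the fiber polynomials. You spell out the escape estimate and the pull-back step in more detail (the paper merely asserts that ``the hypotheses on $F$ immediately imply'' the escape, and pulls back the specific point $(P^n(z),0)$ rather than arguing via $P^{-1}(\pi(\Omega))\subseteq\pi(\Omega)$), but the substance is identical.
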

	
	\begin{proof}
The hypotheses on $F$ immediately imply that there is a constant $R>0$ such that if $\max\{|z|,|w|\}>R$ then
$F^n(z,w)\rightarrow \infty$ when $n\rightarrow \infty$.
Hence, the basin of attraction of $(0,0)$
is contained in $\Delta^2((0,0),R).$
Since $\Omega$ is open, it follows that each $\Omega_z$ is open
and in fact contained in $\Delta(0,R).$
If $(z,w)$ is a point where $z\notin U$, then
$P^n(z)$ cannot converge to $0.$ Hence
$\Omega_z=\emptyset.$
If $z\in U,$ then for all large $n,$ the point
$(P^n(z),0)\in \Omega.$ But then all points in
$F^{-n}(P^n(z),0)$ belong to $\Omega.$ This implies that $\Omega_z$ is non-empty.
	\end{proof}
	
	\begin{lemma}
	Let $z\in U.$ Then $\Omega_{P(z)}=\{Q(z,w); w\in \Omega_z\}$.
	Moreover, the map $w\rightarrow Q(z,w)$ from $\Omega_z$ to $\Omega_{P(z)}$ is a branched cover
	with branching number $d.$
\end{lemma}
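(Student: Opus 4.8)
The statement to prove asserts two things about the fiber map $w \mapsto Q(z,w)$ for $z \in U$: first, that its image is exactly $\Omega_{P(z)}$, and second, that as a map $\Omega_z \to \Omega_{P(z)}$ it is a branched cover of degree $d$.

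\medskip

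\noindent\textbf{Proof proposal.}

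The plan is to exploit the skew-product structure $F(z,w) = (P(z), Q(z,w))$, which says that $F$ maps the fiber over $z$ into the fiber over $P(z)$ via the single-variable polynomial $w \mapsto Q(z,w)$, together with the invariance properties of the basin $\Omega$. First I would establish the set equality $\Omega_{P(z)} = \{Q(z,w) : w \in \Omega_z\}$. For the inclusion ``$\supseteq$'': if $w \in \Omega_z$, then $(z,w) \in \Omega$, so $F^n(z,w) \to (0,0)$; applying $F$ once, $F^{n}(P(z), Q(z,w)) = F^{n+1}(z,w) \to (0,0)$, hence $(P(z), Q(z,w)) \in \Omega$, i.e. $Q(z,w) \in \Omega_{P(z)}$. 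For ``$\subseteq$'': let $w' \in \Omega_{P(z)}$, so $(P(z), w') \in \Omega$. Since $z \in U$, the fiber $\Omega_z$ is nonempty by the previous lemma, and more to the point we must produce a $w$ with $(z,w) \in \Omega$ and $Q(z,w) = w'$. Because $Q(z,\cdot)$ is a polynomial of degree $d$ in $w$ (its leading coefficient is $Q_d$, and one checks from the hypotheses on $Q$ — in particular $Q_0$ has degree $d$ and the degree condition on the $Q_j$ — that $Q(z,\cdot)$ genuinely has degree $d$ for every $z \in U$; this is where one needs to be a little careful and may require that $U$ avoid the zero locus of the leading coefficient, or an argument that the leading coefficient of $Q(z,\cdot)$ is $Q_d$, a nonzero constant), the equation $Q(z,w) = w'$ has a solution $w \in \mathbb{C}$. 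It remains to see $(z,w) \in \Omega$: we have $F(z,w) = (P(z), w') \in \Omega$, so $F^{n}(z,w) = F^{n-1}(P(z),w') \to (0,0)$, giving $(z,w) \in \Omega$ and hence $w \in \Omega_z$.

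\medskip

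Next I would address the branched-cover claim. The key point is that $w \mapsto Q(z,w)$ is a proper holomorphic (indeed polynomial) map of degree $d$ from $\mathbb{C}$ to $\mathbb{C}$, and we have just shown it carries $\Omega_z$ onto $\Omega_{P(z)}$. To see it restricts to a branched cover of degree $d$ between these slices, I would argue that $\Omega_z = (Q(z,\cdot))^{-1}(\Omega_{P(z)})$ — the inclusion $\subseteq$ is the ``$\supseteq$'' direction above, and $\supseteq$ follows because if $Q(z,w) \in \Omega_{P(z)}$ then $F(z,w) \in \Omega$ hence $(z,w) \in \Omega$ as in the previous paragraph. Thus $Q(z,\cdot)\colon \Omega_z \to \Omega_{P(z)}$ is the restriction of a degree-$d$ polynomial to the full preimage of the target, which is automatically a proper map of the same degree $d$; a proper holomorphic map of one-dimensional manifolds is a branched covering, and its degree (counting multiplicity) equals the generic number of preimages, which is $d$. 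One should also note each $\Omega_z$ is open (previous lemma), so these are genuinely open subsets of $\mathbb{C}$ and the branched-cover language applies.

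\medskip

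The main obstacle, and the only place that is not completely formal, is verifying that $w \mapsto Q(z,w)$ really has degree exactly $d$ for every relevant $z$ — equivalently that the top-degree coefficient $Q_d(w)z^d + \cdots$ does not drop degree. From the listed hypotheses, $Q(z,w) = \sum_{j=0}^{d} Q_j(w) z^j$ with $\deg Q_0 = d$ and $\deg Q_j \le d-1$ for $j > 1$; so the coefficient of $w^d$ in $Q(z,w)$ is the constant (leading) coefficient of $Q_0$, which is nonzero, plus possibly contributions only if some $Q_j$ with $j \le 1$ also has degree $d$. One must check (or it is implicit in the setup) that the $w^d$-coefficient of $Q(z,\cdot)$ is a nonzero constant independent of $z$; granting this, $Q(z,\cdot)$ has degree $d$ for all $z$ and the properness/degree counting goes through uniformly. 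I would make this degree bookkeeping explicit at the start of the proof and then the two claims follow by the invariance arguments above.
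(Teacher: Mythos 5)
Your proof is correct and takes essentially the same approach as the paper, which disposes of the lemma in one line (``the lemma follows since for each fixed $z$, the function $Q(z,w)$ is a polynomial of degree $d$''); you simply make explicit the forward and backward invariance giving $\Omega_z = Q(z,\cdot)^{-1}(\Omega_{P(z)})$ and the properness of the restriction. Your side remark about the $w^d$-coefficient of $Q(z,\cdot)$ possibly dropping is a fair reading of the stated hypotheses (they bound $\deg Q_j$ only for $j>1$), but the paper clearly intends that coefficient to be a nonzero constant, as it is in the worked example of Section~4, so this is a gap in the paper's bookkeeping rather than in your argument.
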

	
	\begin{proof}
		The lemma follows since for each fixed $z$, the function $Q(z,w)$
	is a polynomial of degree $d.$ 
	\end{proof}
	
	Using the maximum principle we obtain the following Lemma:
	
	\begin{lemma}
	The connected components of each $\Omega_z$ are simply connected. 
	\end{lemma}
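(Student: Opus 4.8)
The plan is to argue by contradiction, pushing a putative "hole" of a component of $\Omega_z$ forward by the iterates $F^n$ and invoking the maximum modulus principle on the slice $\{z\}\times\mathbb C$, as the sentence preceding the lemma already hints.

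Fix $z$ with $\Omega_z\neq\emptyset$; by the first of the three lemmas this forces $z\in U$, so $P^n(z)\to 0$. For $n\geq 1$ write $F^n(z,w)=(P^n(z),q_n(z,w))$. Since $F$ is a skew product, $q_1(z,w)=Q(z,w)$ is a polynomial in $w$ and $q_{n+1}(z,w)=Q\big(P^n(z),q_n(z,w)\big)$, so by induction $w\mapsto q_n(z,w)$ is a polynomial, in particular an entire function of $w$. Because $\Omega$ is the basin of the attracting fixed point $(0,0)$, one also has $F^n\to(0,0)$ uniformly on compact subsets of $\Omega$ (each point of $\Omega$ eventually enters a fixed contracting neighborhood of $(0,0)$, and the set of points entering within $N$ steps is open, so finitely many such sets cover a given compactum). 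Now suppose some connected component $V$ of $\Omega_z$ is \emph{not} simply connected. Since $V$ is a planar domain, there is a closed curve $\gamma\subset V$ and a point $w_*\in\mathbb C\setminus V$ with $\ind_\gamma(w_*)\neq 0$. Let $\widehat\gamma$ denote the filled curve: the union of $\gamma$ with all bounded connected components of $\mathbb C\setminus\gamma$. Then $\widehat\gamma$ is compact and connected, its topological boundary is contained in $\gamma$, and $\mathbb C\setminus\widehat\gamma$ is exactly the (unique) unbounded component of $\mathbb C\setminus\gamma$.

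The key step is the inclusion $\widehat\gamma\subset\Omega_z$. Indeed, $\{z\}\times\gamma$ is a compact subset of $\Omega$, hence $q_n(z,\cdot)\to 0$ uniformly on $\gamma$; applying the maximum modulus principle to the polynomial $q_n(z,\cdot)$ on $\widehat\gamma$ gives $\max_{\widehat\gamma}|q_n(z,\cdot)|=\max_{\gamma}|q_n(z,\cdot)|\to 0$. Combined with $P^n(z)\to 0$ this yields $F^n(z,w)\to(0,0)$ for every $w\in\widehat\gamma$, i.e.\ $\widehat\gamma\subset\Omega_z$. But $\widehat\gamma$ is connected, contained in $\Omega_z$, and meets $V$ (it contains $\gamma$), so it lies in the single component $V$ of $\Omega_z$; thus $w_*\in\mathbb C\setminus V\subset\mathbb C\setminus\widehat\gamma$, which is the unbounded component of $\mathbb C\setminus\gamma$, forcing $\ind_\gamma(w_*)=0$, a contradiction. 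Hence every component of $\Omega_z$ is simply connected.

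The only delicate point is ensuring that the region enclosed by $\gamma$ cannot leak out of $V$ into some other component of $\Omega_z$; this is precisely why one fills the curve and uses the connectedness of $\widehat\gamma$ together with the maximum-principle inclusion $\widehat\gamma\subset\Omega_z$, rather than merely tracking the single point $w_*$. Everything else is routine: polynomiality of $q_n(z,\cdot)$ in $w$ is immediate from the skew-product form, and the uniform convergence $F^n\to(0,0)$ on compacta is the standard elementary fact about attracting basins. An essentially equivalent packaging uses the criterion ``$V$ is simply connected $\iff$ $\widehat{\mathbb C}\setminus V$ is connected'' and fills a Jordan curve in $V$ separating $V$ from a bounded complementary continuum.
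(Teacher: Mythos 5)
Your proof is correct and follows exactly the approach the paper indicates: the paper gives no explicit proof but prefaces the lemma with ``Using the maximum principle we obtain the following Lemma,'' and your argument --- pushing the filled curve $\widehat\gamma$ forward by the iterates, noting $q_n(z,\cdot)$ is a polynomial in $w$, and applying the maximum modulus principle to conclude $\widehat\gamma\subset\Omega_z$ --- is precisely the standard implementation of that hint.
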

	
	The next Lemma follows from the first item of Definition \ref{def2}.
		
	\begin{lemma}
There is a compact subset $K$ of $U$ such that if
$z\in U$ and $z,P(z)$ are outside $K$, then there are no branch points of the map $\Omega_z$ to $\Omega_{P(z)}.$
In particular, these maps are all isometries in the Kobayashi metric
inside all connected components.
	\end{lemma}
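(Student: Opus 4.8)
The plan is to identify the branch points explicitly and then show that the set of ``bad'' base points $z$ (those whose fibre map has a branch point) has compact closure inside $U$; the first item of condition $\mathcal{C}$ is precisely what guarantees this.

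\emph{Step 1: locating the fibre critical locus.} For fixed $z$, a point $w_0\in\Omega_z$ is a branch point of the degree-$d$ map $q_z\colon w\mapsto Q(z,w)$ exactly when $\frac{\partial Q}{\partial w}(z,w_0)=0$. So I would consider
\[
\mathcal{Z}:=\Big\{(z,w)\in\overline{\Omega}:\ \tfrac{\partial Q}{\partial w}(z,w)=0\Big\}.
\]
Since the hypotheses on $Q$ force $\Omega$ to be bounded, $\overline{\Omega}$ is compact, hence $\mathcal{Z}$ is compact and so its image $K_0$ under the projection to the $z$-coordinate is a compact subset of $\mathbb{C}$. Because $\Omega\subseteq U\times\mathbb{C}$ we have $\overline{\Omega}\subseteq\overline{U}\times\mathbb{C}$, so $K_0\subseteq\overline{U}$; and the first item of condition $\mathcal{C}$ says precisely that $K_0\cap\partial U=\emptyset$. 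Therefore $K_0\subseteq\overline{U}\setminus\partial U=U$, i.e.\ $K_0$ is a compact subset of the open set $U$. I would then take $K$ to be any compact subset of $U$ containing $K_0$ (for later convenience one may also throw in $P(K_0)$); the stated hypothesis ``$z,P(z)\notin K$'' is then only stronger than ``$z\notin K_0$'', which is all that will actually be used.

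\emph{Step 2: no branch points $\Rightarrow$ fibrewise biholomorphism.} Fix $z\in U$ with $z\notin K$, so $z\notin K_0$ and $\frac{\partial Q}{\partial w}(z,\cdot)$ has no zero in $\Omega_z$. Thus the branched cover $q_z\colon\Omega_z\to\Omega_{P(z)}$ of the previous lemma is unramified, hence a local biholomorphism; being a branched cover it is also proper. Since each connected component $V$ of $\Omega_z$ is open and closed in $\Omega_z$, the restriction $q_z|_V$ is still proper, so its image is open, closed and connected in $\Omega_{P(z)}$, hence is a single connected component $W$. Then $q_z|_V\colon V\to W$ is a proper unramified covering map of a simply connected domain (by the earlier lemma applied at $P(z)$), so this covering has degree one, i.e.\ $q_z|_V\colon V\to W$ is a biholomorphism. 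A biholomorphism of domains is a Kobayashi isometry, and the Kobayashi metric of $\Omega_z$ restricted to a connected component $V$ agrees with the Kobayashi metric of $V$ (every analytic disc hitting a point of $V$ has image inside $V$); hence $q_z$ restricts to a Kobayashi isometry on each connected component, which is the assertion.

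The hard part is Step 1: one must know that the fibre critical set inside $\overline{\Omega}$ does not accumulate on $\partial U$. Boundedness of $\Omega$ turns this into a compactness statement, and the first bullet of condition $\mathcal{C}$ then delivers exactly the required disjointness from $\partial U$; without that bullet (as in the second counterexample of \cite{RefH3}) the set $K_0$ could fail to be relatively compact in $U$, and the lemma would break down. Step 2 is then the routine topological fact that a proper unramified cover of a simply connected domain is a biholomorphism, together with the holomorphic invariance of the Kobayashi metric.
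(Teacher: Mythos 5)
Your proposal is correct and fills in precisely the argument the paper has in mind: the paper gives no proof, saying only that the lemma ``follows from the first item of Definition \ref{def2}.'' Your two steps --- compactness of the fibre critical set $\mathcal{Z}\subset\overline{\Omega}$ and its disjointness from the slices over $\partial U$ via condition $\mathcal{C}$, followed by the observation that an unramified proper cover of a simply connected component must be a biholomorphism and hence a Kobayashi isometry --- supply exactly the intended route, and you correctly note that the hypothesis on $P(z)$ is not actually used for this single-step statement.
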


	Our hypothesis on the eigenvalues of $F'(0,0)$ implies that there is a strong stable curve $\Sigma$ in a neighborhood of the origin tangent to the $z$ axis, and a weak stable manifold tangent to  the $w$ axis. 
	Let $(0,w_n)$ denote the preimages of $(0,0)$ with $w_0=0.$
	
	\begin{lemma}
		There exists an $\epsilon >0$ so that for every
		$w_n$ there exists a complex manifold $\Sigma_n=\{w=f_n(z),|z|<\epsilon, f_n(0)=w_n
		\}$ which are preimages of $\Sigma.$ These are pairwise disjoint.
		Moreover, there are no other preimages $(z,w)$ of $\Sigma$
		with $|z|<\epsilon.$
	\end{lemma}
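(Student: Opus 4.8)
\emph{Overview.} The plan is to give, for a suitably small $\epsilon$, a complete description of the full preimage $\bigcup_{k\ge0}F^{-k}(\Sigma)$ inside the thin vertical strip $\{|z|<\epsilon\}$ as a \emph{trivial} covering of that strip, and then to take the $\Sigma_n$ to be the sheets of this covering. The whole difficulty is concentrated in showing that the covering has no branch points, with a single $\epsilon$ working for all $n$.

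\emph{Normalisation.} Near the origin $\Sigma$ is a graph $\{w=\sigma(z)\}$ over a disc $\{|z|<\delta_0\}$ with $\sigma(0)=0$, so the fibre-preserving coordinate change $(z,w)\mapsto(z,w-\sigma(z))$ lets us assume $\Sigma=\{w=0\}$, equivalently $Q(z,0)=0$ for $|z|<\delta_0$; this is consistent because $\Sigma$ is forward invariant. Put $\Gamma:=\{\partial Q/\partial w=0\}$. Since $Q_0'(0)=b\ne0$ and $P'(0)=a\ne0$, the point $0$ is a \emph{simple} root of $Q_0(w):=Q(0,w)$ and of $P$. I would then fix a small $F$-invariant neighbourhood $B\subset\{|z|<\delta_0\}$ of the origin and establish the local rigidity identity
\[F^{-k}(\Sigma)\cap B=\Sigma\cap B\qquad(k\ge0),\]
by induction on $k$: if $(z,w)\in F^{-(k+1)}(\Sigma)\cap B$ then $F(z,w)\in B$ and $F(z,w)\in F^{-k}(\Sigma)$, hence $F(z,w)\in\Sigma$, i.e. $Q(z,w)=0$; as $w\mapsto Q(z,w)$ has $w=0$ as a root of multiplicity one near $z=0$ whose remaining roots stay bounded away from $0$, this forces $w=0$ inside $B$. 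Finally I would fix $\epsilon\in(0,\delta_0)$ small enough that $P$ carries $\{|z|<\epsilon\}$ strictly into itself with a contraction factor $\theta<1$, that $\{0<|z|\le\epsilon\}$ meets no $P^{-j}(0)$ with $j\ge1$ (possible because $0$ is a simple root of $P$), and that the two smallness conditions appearing below hold.

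\emph{Reduction to the absence of branch points.} For fixed $z$ the polynomial $w\mapsto Q(z,w)$ has degree $d$, so $X_k:=F^{-k}(\Sigma)\cap(\{|z|<\epsilon\}\times\mathbb{C})$ is a $d^k$-sheeted branched cover of the disc $\{|z|<\epsilon\}$, contained in $F^{-k}(\Omega)=\Omega$ and hence in a fixed bounded polydisc; its fibre over $z=0$ is $Q_0^{-k}(0)$, a set of $d^k$ \emph{distinct} points $\{(0,w_n):\operatorname{level}(w_n)\le k\}$ — distinct precisely because no critical point of $Q_0$ is a preimage of $0$, which is exactly what conditions $\mathcal{C}$.2 and $\mathcal{C}$.3 guarantee. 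A one-line chain-rule computation (the $w$-derivative of the second component of $F^k$ equals $\prod_{j=0}^{k-1}\partial Q/\partial w$ along the orbit) identifies the branch points of $X_k\to\{|z|<\epsilon\}$ as the points whose length-$k$ forward orbit meets $\Gamma$. Hence, once I know that for $\epsilon$ small no point of any $X_k$ has forward orbit meeting $\Gamma$, each $X_k$ is an unbranched cover of a simply connected disc, therefore trivial, therefore a disjoint union of $d^k$ graphs, one through each $(0,w_n)$ with $\operatorname{level}(w_n)\le k$. These graphs increase with $k$ (since $F(\Sigma)\subset\Sigma$) and so stabilise; the stable graph through $(0,w_n)$ is the required $\Sigma_n$, the family is pairwise disjoint, and it visibly exhausts the preimages of $\Sigma$ over the strip.

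\emph{The main obstacle: uniform absence of branch points.} This is the crux, because a bare implicit-function argument yields only a radius depending on $\operatorname{level}(w_n)$, which may shrink to $0$; the remedy is a compactness argument powered by the rigidity identity above. Suppose $(z,w)\in X_k$ and $F^j(z,w)\in\Gamma$ for some $0\le j<k$, and set $(z',w')=F^j(z,w)$, so $|z'|\le\epsilon$, $(z',w')\in\Gamma\cap\Omega$, and $(z',w')\in F^{-m}(\Sigma)$ with $m:=k-j\ge1$. If $z'=0$ then $w'\in Q_0^{-m}(0)$ would be a critical point of $Q_0$, against $\mathcal{C}$.3; and if $z\ne0$ then $z'=P^j(z)\ne0$ by the choice of $\epsilon$. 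So one may assume $0<|z'|<\epsilon$. After shrinking $\epsilon$, $\Gamma\cap\Omega$ over the strip consists of finitely many analytic branches issuing from points $(0,c_i)$ with $c_i$ a critical point of $Q_0$ in $\Omega_0$ (branches through critical points of $Q_0$ outside $\Omega_0$ lie in the basin of infinity by $\mathcal{C}$.2, hence never in $\Omega$). It thus remains to rule out: a critical point $c\in\Omega_0$ of $Q_0$, points $z_n'\to0$ with $z_n'\ne0$ along such a branch (so $\omega(z_n')\to c$), and integers $m_n\ge1$ with $F^{m_n}(z_n',\omega(z_n'))\in\Sigma$. If $(m_n)$ is bounded, passing to a limit gives $Q_0^{m}(c)=0$, i.e. $c$ is a preimage of $0$ — impossible. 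If $m_n\to\infty$, the orbit of $(0,c)$ enters $B$ after a \emph{bounded} number $J_0$ of steps (there are finitely many $c_i$), so for large $n$ we have $F^{J_0}(z_n',\omega(z_n'))\in B$, and then, by the rigidity identity,
\[F^{J_0}\!\bigl(z_n',\omega(z_n')\bigr)\in F^{-(m_n-J_0)}(\Sigma)\cap B=\Sigma\cap B,\]
which returns us to the bounded case and the same contradiction. This proves the claim, and with it the lemma; the only conditions from $\mathcal{C}$ that are used are $\mathcal{C}$.2 and $\mathcal{C}$.3. The two routine verifications I would still have to fill in are that $\Gamma\cap\Omega$ over the strip genuinely reduces to the finitely many branches above, and that the forward orbit of a point of $X_k$ lying over $z=0$ stays on the $w$-axis and meets $\Gamma$ only at critical points of $Q_0$ (again excluded by $\mathcal{C}$.3).
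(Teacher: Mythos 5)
Your proof is correct in substance and it attacks precisely the point that the paper's own argument passes over quickly. The paper constructs the graphs $\Sigma_n$ one at a time: it observes that $F$ is a local biholomorphism at $(0,w_1)$, that $P$ contracts, hence $F^{-1}(\Sigma_0)$ near $(0,w_1)$ is a graph over a $z$-disc \emph{larger} than $\{|z|<\epsilon\}$, and then says the other $\Sigma_n$ follow by induction. What is left implicit there is exactly the issue you single out as the ``main obstacle'': a bare implicit-function argument at $(0,w_{n+1})$ gives a radius of validity that a priori depends on $|Q_0'(w_{n+1})|$ and the geometry of $\Sigma_n$, so some further input is needed to get one $\epsilon$ that works for every $n$. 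You supply that input by a genuinely different route: instead of building the sheets inductively, you describe the whole set $X_k=F^{-k}(\Sigma)\cap\{|z|<\epsilon\}$ as a $d^k$-sheeted branched cover of the disc, identify branch points with orbits that meet $\Gamma=\{\partial Q/\partial w=0\}$, and then rule these out uniformly in $k$ by combining the local rigidity identity $F^{-k}(\Sigma)\cap B=\Sigma\cap B$ with a compactness argument along the finitely many local branches of $\Gamma$ emanating from the critical points $(0,c_i)$ of $Q_0$, invoking $\mathcal{C}$.2 and $\mathcal{C}$.3 exactly where they are needed. The trade-off is that the paper's version is shorter and stays close to the dynamical picture (pullback along the strong stable foliation), whereas yours is longer but actually closes the uniformity gap and, as a by-product, proves the ``no other preimages'' clause rather than declaring it obvious: once $X_k$ is known to be an unbranched cover of a simply connected disc with fibre $Q_0^{-k}(0)$ over $z=0$, the sheet decomposition and the exhaustiveness of the $\Sigma_n$ come for free. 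The two ``routine verifications'' you flag (that $\Gamma\cap\Omega$ over the strip is a finite union of branches at the $c_i$'s, and that orbits over $z=0$ stay on the $w$-axis) are indeed routine and follow from $P(0)=0$ and the finiteness of the critical set of $Q_0$; I would just spell out the $c_i\notin\Omega_0$ case via boundedness of $\Omega$ rather than saying the branches ``lie in the basin of infinity,'' since $\mathcal{C}$.2 is a statement about the one-variable map $Q_0$, not directly about nearby two-variable orbits.
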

	
	\begin{proof}
 We first observe that we can take $\Sigma_0$ to be a piece of $\Sigma$
 which is a graph over the $z-$ axis. If $Q(0,w_1)=0,$ then we observe that $F$ is biholomorphic in a neighborhood of $(0,w_1)$ to 
 a neighborhood of $(0,0)$ and the map is contracting in the $z$ direction. We now define $\Sigma_1$ as the preimage of $\Sigma_0$
 which we restrict to $|z|<\epsilon.$ The other $\Sigma_n$
 are obtained by induction. The last part of this Lemma is obvious. 
	\end{proof}
	
	\begin{lemma}
	There exists a constant $C_1$ so that if $|z|<\epsilon$ 
	and $w\in \Omega_z$, then there is an integer $n$ so that
	the Kobayashi distance in $\Omega_z$ between $(z,w)$ and
	$(z,f_n(z))$ is at most $C_1.$
	\end{lemma}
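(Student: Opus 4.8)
My plan is to reduce the statement to the one–dimensional Theorem~A applied on the central fibre $z=0$, and then to transport the bound obtained there to every slice $\Omega_z$ with $|z|<\epsilon$.

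\textbf{The fibre $z=0$.} Since $F(0,w)=\bigl(0,Q_0(w)\bigr)$ with $Q_0(w):=Q(0,w)$, iteration gives $F^n(0,w)=\bigl(0,Q_0^{\,n}(w)\bigr)$; hence $\Omega_0$ is precisely the basin of attraction of the attracting fixed point $0$ of the degree-$d$ polynomial $Q_0$, and $(0,w)\in\bigcup_kF^{-k}(0,0)$ exactly when $w\in\bigcup_kQ_0^{-k}(0)$. So the numbers $w_n$ are the elements of $\bigcup_kQ_0^{-k}(0)$ and $f_n(0)=w_n$. Applying Theorem~A to $Q_0$ (its hypothesis that $0$ has a preimage other than $0$ in its immediate basin follows from condition $\mathcal C$, which keeps the critical points of $Q_0$ in that basin away from $0$) yields a constant $\tilde C$ such that every $w\in\Omega_0$ lies within Kobayashi distance $\tilde C$, in its own component of $\Omega_0$, of some $w_n=f_n(0)$ in that component — the Lemma for $z=0$, with $C_1=\tilde C$.

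\textbf{Propagation to $|z|<\epsilon$.} Fix $\rho\in(0,\epsilon)$ with $\Delta^2((0,0),\rho)\subset\Omega$ and $P\bigl(\Delta(0,\rho)\bigr)\subset\Delta(0,\rho)$. Two ingredients move the bound off the central fibre. First, a normal–families comparison: because $\Omega$ is open one has $\Omega_0\subset\liminf_{\zeta\to0}\Omega_\zeta$ and $\limsup_{\zeta\to0}\Omega_\zeta\subset\overline{\Omega_0}$, so the slices converge to $\Omega_0$ in the Carathéodory sense, $F_{\Omega_\zeta}\to F_{\Omega_0}$ locally uniformly on $\Omega_0$, and the graph functions $f_n$ — uniformly Lipschitz on $\{|z|<\epsilon/2\}$ by Cauchy estimates, their graphs being contained in the bounded set $\Omega$ — satisfy $f_n(\zeta)\to w_n$ uniformly in $n$; transporting the $\Omega_0$–geodesic into $\Omega_\zeta$ then gives the bound for $|\zeta|$ small and $w$ in a fixed compact part of $\Omega_0$, and a fixed number of iterates of $F$ puts the $P$–component of any $(z,w)$, $|z|<\epsilon$, into $\Delta(0,\rho/4)$, reducing to such a case. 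Second, the earlier Lemma on the compact set $K\subset U$: whenever $\zeta,P(\zeta)\notin K$, the map $\Omega_\zeta\to\Omega_{P(\zeta)}$ carries each (simply connected) component of $\Omega_\zeta$ biholomorphically, hence Kobayashi–isometrically, onto a component of $\Omega_{P(\zeta)}$, and carries marked points to marked points (since $F(\Sigma_n)\subset\Sigma_m$ with $Q_0(w_n)=w_m$ on the fibre $z=0$); thus the bound is transported verbatim along any stretch of a $P$–orbit lying outside $K$. Combining the two ingredients with the forward dynamics gives a uniform $C_1$.

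\textbf{The main obstacle.} The difficulty is that condition $\mathcal C$ in Definition~\ref{def2} only prevents a critical point of $Q_0$ lying in $\Omega_0$ from ever mapping to $(0,0)$; it does not forbid such points. Then $0$ may lie in $K$, the maps $\Omega_\zeta\to\Omega_{P(\zeta)}$ are branched for all small $\zeta$, the orbit $z,P(z),\dots\to0$ never escapes $K$, and the verbatim transport fails — precisely for $w$ so deep in $\Omega_z$ (Euclidean–close to $\partial\Omega_z$) that a lifted geodesic is forced through the branch locus, or for limiting points $w^\ast\in\partial\Omega_0$, where $\Omega_z$ and $\Omega_0$ need not be comparable because $\partial\Omega_0$ is fractal. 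The way around this is to use all four items of condition $\mathcal C$ together: they make every critical point of every $Q(z,\cdot)$, and of $P$, passive — its forward orbit stays in the Fatou set and avoids the backward orbit of $(0,0)$. Passivity yields the structural stability of the fibred basin over a disc $\{|z|<\epsilon'\}$ with $\epsilon'>\epsilon$: a holomorphic motion carrying $w_n$ to $f_n(z)$ and $\partial\Omega_0$ to $\partial\Omega_z$, so that by the $\lambda$–lemma each $\Omega_z$ with $|z|<\epsilon$ is quasiconformally — hence, for planar domains, quasi-isometrically for the Kobayashi metric — equivalent to $\Omega_0$ with distortion bounded uniformly in $z$; the bound $\tilde C$ then transfers with $C_1$ depending only on that distortion. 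Carrying out this structural stability argument from condition $\mathcal C$ is the heart of the proof, and it is exactly the step that the counterexample $P=az+z^2,\ Q=w^2+cw+bz$ violates, since there the critical value of $w\mapsto Q(z,w)$ meets $\partial\Omega$ over some $z\in\partial U$.
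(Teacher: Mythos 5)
The paper proves this lemma by \emph{contradiction}, using forward dynamics toward the central fibre. It sets $K_0=\overline{\Delta^2((0,0),\epsilon)}$ and inductively $K_{n+1}=F^{-1}(K_n)\cap\{|z|\le\epsilon\}$; for $n$ large, $K_n$ contains every point of $\Omega\cap\{|z|\le\epsilon\}$ where $\partial Q/\partial w=0$, so the slice map $F:\Omega_z\setminus K_n\to\Omega_{P(z)}\setminus K_{n-1}$ is an \emph{unbranched} covering, hence a local Kobayashi isometry, and Kobayashi-distance increasing on the part whose image also avoids $K_n$. If some slice $\Omega_{z_k}$ contained a Kobayashi disc of radius $k$ avoiding every graph $\Sigma_m$, that disc would sit in some $K_m$; pushing forward by $F$ repeatedly (which sends $z_k$ toward $0$ and preserves the family $\{\Sigma_m\}$) one finds arbitrarily large avoiding discs over $z$ arbitrarily close to $0$, hence in $\Omega_0$ itself, contradicting Theorem~A applied to $Q_0$.

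Your proposal runs in the opposite direction — establish the bound on $\Omega_0$ via Theorem~A and then \emph{propagate it outward} to $|z|<\epsilon$ — and this is where it breaks down. The propagation you describe rests on two unproved claims. First, the Carath\'eodory-convergence assertion $\Omega_0\subset\liminf_{\zeta\to0}\Omega_\zeta$ and $\limsup_{\zeta\to0}\Omega_\zeta\subset\overline{\Omega_0}$: the first inclusion is openness of $\Omega$, but the second (upper semicontinuity of the slices) is not automatic for fibred basins of skew products, and nothing in condition $\mathcal C$ obviously supplies it. Second, and more seriously, your own ``main obstacle'' paragraph concedes that this comparison fails near $\partial\Omega_0$ and when the $P$-orbit never leaves the compact $K$, and you propose to repair it with a holomorphic motion of $\partial\Omega_0$ over a disc in $z$, the $\lambda$-lemma, and a quasiconformal-to-Kobayashi-quasi-isometry transfer. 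That is a substantial structural-stability theorem for fibre Julia sets of skew products, which you neither prove nor cite, and you yourself write that carrying it out ``is the heart of the proof.'' So the proposal is an outline with the central step left open; it is not a proof. The paper avoids all of this by never comparing $\Omega_z$ to $\Omega_0$ directly: it only needs the unbranched-covering/local-isometry fact near $\partial K_n$, and it only uses Theorem~A once, at the end, on $\Omega_0$ itself. If you want to keep an ``outward'' propagation, the reliable mechanism is not a holomorphic motion but the same covering observation used contrapositively: a large avoiding disc in $\Omega_z$ forces, by pushing forward, a large avoiding disc in $\Omega_0$.
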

	
	\begin{proof}
	
	Let $K_0=\{\overline{\Delta^2((0,0), \epsilon)}\}$. Shrinking $\epsilon$ if necessary this is
		contained in the basin of attraction $\Omega.$
		Inductively define $K_{n+1}=F^{-1}(K_n)\cap \{|z|\leq \epsilon$\}.
		This is an increasing sequence of compact subsets of $\Omega$.
		For all large $n$, the sets $K_n$ contain all points $(z,w)$ in $\Omega$ 
		where $|z|\leq \epsilon$ and $\frac{\partial Q}{\partial w}=0.$
		We fix a large $n$ and consider the
		map $F(z, w)$ from $\Omega_z\setminus K_n$ to $\Omega_{P(z)}\setminus K_{n-1}$ when $|z|\leq \epsilon$. This is an unbranched covering. Hence, it is a local isometry in the Kobayashi metric. If we restrict
		to those points whose image also are contained in the complement of $K_n$ and still using the Kobayashi metric on
		the complement of $K_n$ in the respective slices, the map $F$
		becomes distance increasing.
		 It follows now that if the slices $\Omega_z$ contain arbitrarily large Kobayashi discs, then this happens as well for $z$ arbitrarily close to $0.$
		 For a fixed $C$, there will exist a $z$ for which $\Omega_z$ contains a Kobayashi disc of radius at least $C$. This disc will be contained in some $K_m.$ Pushing forward we find the same to be true for points arbitrarily close to $0$. Hence, it follows that
		 the same holds for the slice $\Omega_0.$ 
		 But  this contradicts Theorem A above applied to $\Omega_0.$
		 
		 \end{proof}
		 
		We next prove a global version of the Lemma.
		\begin{lemma}
			There exists a constant $C_2$ such that the following holds:
			Let  $z\in U$ 
			and $w\in \Omega_z$. Then there exists a point $(z,\eta)\in \Omega_z$ so that the Kobayashi distance between $(z,w)$
			and $(z,\eta)$  in $\Omega_z$ is at most $C_2$. Moreover, there exists an $N$ such that $|P^N(z)|<\epsilon$
			and such that $F^N(z,\eta)$ belongs to one of the
			graphs $\Sigma_n.$
		\end{lemma}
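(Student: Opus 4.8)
The plan is to carry $(z,w)$ forward by $F$ until its first coordinate enters the disc $\{|z|<\epsilon\}$, where the previous Lemma applies, and then to lift the point it produces back to the slice $\Omega_z$ along a curve of controlled Kobayashi length. If $|z|<\epsilon$ there is nothing to do: the previous Lemma gives an $n$ with $d_{\Omega_z}\bigl((z,w),(z,f_n(z))\bigr)\le C_1$ and $(z,f_n(z))\in\Sigma_n$, so $N=0$ works. Assume $|z|\ge\epsilon$. Shrinking $\epsilon$ once (so that $P$ maps $\{|z|<\epsilon\}$ compactly into itself; the earlier Lemmas are unaffected), since $z\in U$ there is a first $N\ge1$ with $|P^N(z)|<\epsilon$, and then $|P^n(z)|<\epsilon$ for all $n\ge N$. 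Writing $(P^N(z),w')=F^N(z,w)$, the previous Lemma applied in $\Omega_{P^N(z)}$ yields an integer $n$ and a point $q=(P^N(z),f_n(P^N(z)))\in\Sigma_n$, lying in the same component of $\Omega_{P^N(z)}$ as $w'$ (distinct components are at infinite Kobayashi distance), with $d_{\Omega_{P^N(z)}}\bigl((P^N(z),w'),q\bigr)\le C_1$. It then remains to find $\eta$ with $F^N(z,\eta)=q$, in the component of $\Omega_z$ containing $w$, with $d_{\Omega_z}\bigl((z,w),(z,\eta)\bigr)$ bounded by a universal constant.

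Next I would show that $F^N\colon\Omega_z\to\Omega_{P^N(z)}$, a proper branched covering of degree $d^N$ and the composition of the degree-$d$ branched coverings $\psi_j=Q(P^j(z),\cdot)\colon\Omega_{P^j(z)}\to\Omega_{P^{j+1}(z)}$ ($0\le j<N$), has only uniformly few branch points. By the Lemma on the compact set $K$, $\psi_j$ is an unbranched covering — hence, a covering preserving the infinitesimal Kobayashi metric, a local isometry — unless $P^j(z)\in K$ or $P^{j+1}(z)\in K$. As $N$ is the first entry time into $\{|z|<\epsilon\}$, each $P^j(z)$ with $j<N$ lies in $K'=K\cap\{|z|\ge\epsilon\}$, a compact subset of $U$ disjoint from $\{|z|<\epsilon\}$. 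Writing $U=\bigcup_m P^{-m}(\{|z|<\epsilon\})$ as an increasing union of open sets and using compactness of $K'$, one finds an $m_0$ independent of $z$ with $K'\subset\bigcup_{m\le m_0}P^{-m}(\{|z|<\epsilon\})$; since $P$ contracts $\{|z|<\epsilon\}$ into itself, every $P$-orbit then meets $K'$ at most $m_0$ times, so at most $2m_0+2$ of the $\psi_j$ are branched, each with at most $d-1$ critical values. Hence the set $V\subset\Omega_{P^N(z)}$ of critical values of $F^N$ has at most $V_0:=(2m_0+2)(d-1)$ elements, a bound independent of $z$ and $w$.

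Finally I would pull back. Replace $(z,w)$ by a point $(z,\hat w)$ in the same component of $\Omega_z$, with $d_{\Omega_z}\bigl((z,w),(z,\hat w)\bigr)\le1$ and $F^N(z,\hat w)\notin V$ (possible since $F^N(z,\cdot)$ is a non-constant, hence open, polynomial while $V$ is finite); then $F^N(z,\hat w)$ is a regular value in the component of $q$, at Kobayashi distance $\le C_1+1$ from $q$. Pick a curve $\gamma$ from $F^N(z,\hat w)$ to $f_n(P^N(z))$ in that component, of Kobayashi length $\le C_1+2$, meeting $V$ at most at its endpoint. Lifting $\gamma$ through $F^N$ starting at $(z,\hat w)$: away from $V$ this is a lift across the unbranched covering $F^N\colon\Omega_z\setminus(F^N)^{-1}(V)\to\Omega_{P^N(z)}\setminus V$, and by properness of $F^N$ it extends continuously across the endpoint, ending at a point $(z,\eta)$ with $F^N(z,\eta)=q\in\Sigma_n$ and lying in the component of $\Omega_z$ containing $w$. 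A local analysis at each point of $(F^N)^{-1}(V)$ — where $F^N$ has branching of order $\le d$ about an interior point — shows that the lift exceeds $\gamma$ in Kobayashi length by at most a bounded amount per point of $V$, once $\gamma$ is routed close enough to (without meeting) $V$; the needed closeness may depend on $z$, but the resulting bound does not. This gives $d_{\Omega_z}\bigl((z,w),(z,\eta)\bigr)\le C_1+3+V_0=:C_2$.

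The main obstacle is precisely this last length estimate for the lift. The slices $\Omega_z$ form an essentially fractal, wildly varying family, and the local normal forms of $F^N$ at its critical points are not uniformly controlled, so one cannot simply invoke a universal distortion constant; rather, one must check that the excess length created near each of the (uniformly boundedly many) branch points can be pushed below a fixed bound by choosing, in a $z$-dependent way, how tightly $\gamma$ hugs $V$, while the complementary part of the lift preserves Kobayashi length exactly because it is a lift across a covering map. Everything else is soft: the uniform bound $V_0$ needs only compactness of $K$ in $U$ together with the elementary fact that $P$-orbits visit a fixed compact subset of $U$ boundedly often, and the sole one-variable input is Theorem A, already used for the previous Lemma.
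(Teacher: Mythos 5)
Your overall architecture is the same as the paper's: take $N$ to be the first time $|P^N(z)|<\epsilon$, invoke the previous Lemma in the slice $\Omega_{P^N(z)}$, use condition $\mathcal C$ (via the compact set $K$ of Lemma~3.6) to bound the number of slice maps $\Omega_{P^j(z)}\to\Omega_{P^{j+1}(z)}$, $0\le j<N$, that are actually branched, and pull back while controlling the Kobayashi distance. Your compactness argument with $K'$ and the entry time $m_0$ is a correct elaboration of the paper's bare statement that ``at most $\tilde M$ of these actually have branch points.''

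Where you diverge is the pullback step, and there you have a gap that you flag but do not close. The paper argues one slice map at a time: if every Kobayashi $C'$-disc in $\Omega_{P^{n+1}(z)}$ meets the relevant target set, then every Kobayashi $C''$-disc in $\Omega_{P^n(z)}$ meets its preimage, with $C''=C'$ when the slice map is unbranched (an unbranched covering of simply connected components is a biholomorphism, hence a Kobayashi isometry) and $C''$ a function of $C'$ and $d$ alone when it is branched. Iterating at most $\tilde M$ times gives $C_2$. The branched step is a clean one-variable fact: uniformizing the two components to $\Delta$ with $v\mapsto 0$ and $f(v)\mapsto 0$, the slice map becomes a Blaschke product $B$ of degree $d'\le d$ with $B(0)=0$, and for $|\tau|\le\tanh(C'/2)$ the product of the $d'$ solutions of $B=\tau$ has modulus $|\tau|$, so some solution $\eta$ satisfies $|\eta|\le\tanh(C'/2)^{1/d'}$, i.e. $d_\Delta(0,\eta)\le 2\,\mathrm{arctanh}\bigl(\tanh(C'/2)^{1/d}\bigr)=:C''$.

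You instead lift a single path $\gamma$ through the full composite $F^N$ and try to bound the length of the lift. The claim that ``the complementary part of the lift preserves Kobayashi length exactly because it is a lift across a covering map'' is where this breaks: the covering $F^N:\Omega_z\setminus(F^N)^{-1}(V)\to\Omega_{P^N(z)}\setminus V$ is a local isometry for the Kobayashi metrics of the \emph{punctured} slices, which strictly dominate the metrics of $\Omega_z$ and $\Omega_{P^N(z)}$ away from the boundary; in the ambient metrics, which are the ones in the statement of the Lemma, $F^N$ is distance-decreasing, so the lift of $\gamma$ is \emph{longer} than $\gamma$ throughout and not just near the critical fiber (for $z\mapsto z^2$ on $\Delta$ the ratio of the Poincar\'e density to the pulled-back density is $(1+|z|^2)/(2|z|)$, which exceeds $1$ everywhere in the interior and is unbounded near $0$). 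Routing $\gamma$ close to $V$ does not address this, and you leave the resulting estimate unproved, with a $z$-dependent choice whose $z$-independence is precisely what must be shown. As written, the final step does not establish the bound. The cleanest repair is to follow the paper and pull back slice by slice, using the Blaschke-product root estimate for the at most $\tilde M$ branched steps; alternatively, note that $F^N$ restricted to the component of $(z,w)$ is itself a Blaschke product of degree $\le d^{\tilde M}$ (only branched slice maps can have degree $>1$ on a simply connected component), and apply the same one-line root estimate to it directly, without any path lifting.
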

		
		\begin{proof}
			Let $z\in U$. Let $N$ be the smallest integer so that
			$|P^N(z)|<\epsilon.$ Consider the sequence of
			slice maps $f_1:\Omega_z\rightarrow \Omega_{P(z)}$,
			$f_2:\Omega_{P(z)}\rightarrow \Omega_{P^2(z)}$,
			..., $f_N:\Omega_{P^{N-1}(z)}\rightarrow \Omega_{P^N(z)}$.
			These are branched covers of degree $d$. But by the first
			item in condition $\mathcal C$ at most $\tilde{M}$ of these actually have branch points, where $\tilde{M}$ is a uniform bound.
			We consider the points on the
			slice $P^N(z)$ which belong to the $\{\Sigma_n\}$.
			By the previous Lemma, no Kobayashi disc of radius $C$
			in the slice can avoid all these points. Then the same holds 
			inductively backwards for all the slices $P^n(z)$ for $0\leq n<N$ except that the constant $C$ must be replaced by a larger constant whenever the slice map has a branch point.
			Moreover, the increase in the constant is given by the 
			branching order which is uniform. It follows that there is a constant $C_2$ as claimed.
			\end{proof}
			
		We are now ready to prove our main theorem, Theorem 3.1:

		\begin{proof}
		First, let $\Lambda_0=\{w=f_0(z), |z|<\epsilon\}.$
		Inductively, let $\Lambda_{n+1}=F^{-1}(\Lambda_n).$
		Let $\Lambda=\cup_n\Lambda_n.$
		Let $D_0=\{(z,w), |z|,|w|<\epsilon\}$ and let inductively
		$D_{n+1}=F^{-1}(D_n).$ Then the sequence $D_n$ exhaust $\Omega$ and
		all $\Lambda_n$ are closed subvarieties of $D_n.$

			Note that all points in $D_n$ have a bounded Kobayashi distance (depending on $n$) from the origin because $D_n$ is relatively compact in $\Omega.$
			
			Let $(z,w)\in \Omega$ be a point in $D_{n+1}\setminus D_n$
			for some large $n.$ By Lemma 3.8 there is a point $(z,\eta)$ in $\Lambda$ belonging to $\Omega_z$ with Kobayashi distance in $\Omega_z$ at most $C_2$. Then $F^{n+1}(z,w)=(a,b)$
			with $|a|,|b|<\epsilon.$ This implies that
			$F^{n+1}(z,\eta)$ is on some graph $w=f_m(z)$ and the Kobayashi distance in the slice $\Omega_a$ is at most $C.$
			This implies that $m\leq m_0$ for a uniform $m_0.$
			This implies that $(z,\eta)\in U_{n+M}$ for a uniform $M.$
			Similarly we can suppose that $(z, \eta)\notin U_{n-M}$.
			Consider the orbit $(z_m,\eta_m)$ where
			$(z_0, \eta_0)=(z,\eta)$ and $z_{m+1}, \eta_{m+1}=F(z_m,\eta_m)$
			Let $m'$ be the first number where $z_{m'}, \eta_{m'}\in \Lambda _0$.
			Let $\Sigma_0=\Lambda_0$ and consider the closed subvariety of $U_1$ given by $F^{-1}(\Lambda_0).$
			This is a multisheeted branched cover over $P^{-1}(|z|<\epsilon).$
			Let $\Sigma_1$ be the irreducible branched cover over 
			$P^{-1}(|z|<\epsilon)\setminus \{|z|<\epsilon\}$ which contains
			$(z_{m'-1},\eta_{m'-1})$. We continue inductively to 
			obtain finally $\Sigma_{m'}$ which is branched  over
			$U_{m'+1}\setminus U_{m'}$ and contains both $(z,\eta)$ and a preimage $p$ of the origin.
			Notice also that the map $F$ is locally biholomorphic
			on $U_{n+1}\setminus U_n$ so $F$ is a Kobayashi isometry
			on $\Lambda$ as well.
			Hence the Kobayashi distance between $(z,\eta)$ and
			$p$ on $\Sigma_{m'}$ is uniformly bounded, but then it is also uniformly bounded in the Kobayashi metric on $\Omega.$
			\end{proof}

			\section{An example where the Theorem applies}
			
			We just show that the map $F(z, w)=(P(z), Q(z,w))=(z^2+\frac{z}{4}, 
			w^2+\frac{w}{2}+Lz^2)$ is in $\mathcal A$ for suitable $L$ to be determined.
			We see that $\{|z|<\frac{3}{4}\}\subset U\subset \frac{5}{4}.$ 
			Hence, $\partial U\subset \{3/4\leq |z|\leq 5/4\}.$
			
			The critical point $\frac{\partial Q}{\partial w}=0$
			is given by $w=-\frac{1}{4}$.
			
			\begin{lemma}
				The basin of attraction of $(0,0)$ is contained in
				the set $|z|< \frac{5}{4}, |w|< B(L)$, where $B(L)-\frac{25L}{16 B(L)}\geq \frac{3}{2}$.
				\end{lemma}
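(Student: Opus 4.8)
The plan is to treat the two coordinates separately. For the $z$‑bound there is essentially nothing to do: if $(z,w)$ lies in the basin of attraction of $(0,0)$ then $P^n(z)\to 0$, so $z\in U$, and it has already been observed that $U\subset\{|z|<5/4\}$ (indeed $|P(z)|\ge|z|^2-|z|/4>|z|$ as soon as $|z|>5/4$, and the circle $|z|=5/4$ is disposed of by one more iterate, since the only point there with $|P(z)|=5/4$ is $z=-5/4$, whose image $5/4$ satisfies $|P(5/4)|=15/8>5/4$). The real content of the lemma is the bound on $|w|$, and the idea is to show that over the strip $\{|z|<5/4\}$ the fibre map $w\mapsto Q(z,w)$ pushes $\{|w|\ge B(L)\}$ strictly outward, so a point with $|w|\ge B(L)$ cannot lie in $\Omega$.

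The key step I would establish first is a one‑step estimate: for $|z|<5/4$ and $|w|\ge B:=B(L)$,
\[
|Q(z,w)|=\Bigl|w^2+\tfrac{w}{2}+Lz^2\Bigr|\ \ge\ |w|^2-\tfrac{|w|}{2}-L|z|^2\ >\ |w|^2-\tfrac{|w|}{2}-\tfrac{25L}{16},
\]
the last inequality being strict because $|z|<5/4$. Since $t\mapsto t-\tfrac{25L}{16t}$ is strictly increasing on $(0,\infty)$, the hypothesis $B-\tfrac{25L}{16B}\ge\tfrac32$ yields $t-\tfrac{25L}{16t}\ge\tfrac32$, hence $t^2-\tfrac{3t}{2}-\tfrac{25L}{16}\ge0$, for every $t\ge B$; taking $t=|w|$ this says $|w|^2-\tfrac{|w|}{2}-\tfrac{25L}{16}\ge|w|$. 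Combining the two displays gives $|Q(z,w)|>|w|$.

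Then I would run the escape argument. Suppose $(z,w)\in\Omega$ with $|w|\ge B$, and write $(z_n,w_n)=F^n(z,w)$. Since $z\in U$ we have $|z_n|<5/4$ for all $n\ge0$, so the key step applies at every stage where $|w_n|\ge B$; by induction $(|w_n|)_n$ is then strictly increasing and stays $\ge B$. If it were bounded it would converge to some $\ell$ with $\ell>|w_0|\ge B$ (strict monotonicity), and letting $n\to\infty$ in $|w_{n+1}|\ge|w_n|^2-\tfrac{|w_n|}{2}-\tfrac{25L}{16}$ would force $\ell\ge\ell^2-\tfrac{\ell}{2}-\tfrac{25L}{16}$, i.e. $\ell-\tfrac{25L}{16\ell}\le\tfrac32$, contradicting $\ell-\tfrac{25L}{16\ell}>B-\tfrac{25L}{16B}\ge\tfrac32$ (again by monotonicity and $\ell>B$). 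Hence $|w_n|\to\infty$, which is incompatible with $F^n(z,w)\to(0,0)$; therefore $|w|<B(L)$, completing the proof.

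The only point that needs care — and the closest thing to an obstacle — is keeping the inequalities strict so that the monotone sequence $(|w_n|)$ genuinely diverges rather than merely failing to decrease. This is exactly where the open condition $z\in U$ (forcing $|z_n|<5/4$ strictly, not just $|z_n|\le 5/4$) and the comparison with the increasing function $t\mapsto t-\tfrac{25L}{16t}$ are used, instead of stopping at the weaker conclusion $|w_{n+1}|\ge|w_n|$.
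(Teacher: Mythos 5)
Your proof is correct and follows essentially the same route as the paper: bound $|z|<5/4$ from the polynomial $P$, then show $|Q(z,w)|>|w|$ whenever $|w|\ge B(L)$ and $|z|<5/4$ using the hypothesis on $B(L)$, and conclude the orbit cannot return to $(0,0)$. Your closing detour proving $|w_n|\to\infty$ is unnecessary — once the induction gives $|w_n|\ge B(L)>0$ for all $n$, that alone already contradicts $F^n(z,w)\to(0,0)$ — but this does not affect correctness.
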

				
				\begin{proof}
					Clearly $|P(z)|>|z|$ when $|z|>\frac{5}{4}.$
					When $|w|>B(L)$, we get that
				$|Q(z, w)|>|w| B(L)-|w|/2-\frac{25 L}{16},$
				so $|Q(w)>|w|(B(L)-1/2-\frac{25L}{16B(L)}).$
					So if condition (*) is satisfied,
					$$
					(*) B(L)-1/2-\frac{25L}{16 B(L)}\geq 1. 
					$$ 
					Then conclusion holds.
				\end{proof}
				
				\begin{lemma}
				If $z\in \partial U,$ then there is no $w$ for which
				$\frac{\partial  Q}{\partial w}=0 $ and $(z,w)\in \overline{\Omega}.$
				\end{lemma}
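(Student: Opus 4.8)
The plan is as follows. Since $Q(z,w)=w^2+\tfrac w2+Lz^2$, the only $w$ with $\frac{\partial Q}{\partial w}=0$ is $w=-\tfrac14$, so the statement is equivalent to the assertion that $(z,-\tfrac14)\notin\overline\Omega$ for every $z\in\partial U$. I would derive this from two facts: the basin of infinity $I=\{(z,w):F^n(z,w)\to\infty\}$ is open, since $I=\bigcup_{n\ge0}F^{-n}\big(\{\max(|z|,|w|)>R\}\big)$ for the radius $R$ with the property that $\max(|z|,|w|)>R$ forces $F^n(z,w)\to\infty$ (established at the beginning of Section 3); and $I$ is disjoint from $\Omega$. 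Together these give $\overline\Omega\cap I=\emptyset$, so it suffices to prove that $(z,-\tfrac14)\in I$, i.e. that the $F$-orbit of $(z,-\tfrac14)$ escapes to infinity.

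First I would note that $\partial U$ is forward invariant under $P$: the full basin $U$ of the attracting fixed point $0$ of $P$ satisfies $P^{-1}(U)=U$ and $P(U)\subseteq U$, so a point $z\in\partial U=\overline U\setminus U$ has $P(z)\in\overline U$ and $P(z)\notin U$, whence $P(z)\in\partial U$. Therefore, setting $z_n=P^n(z)$, every $z_n$ lies in $\partial U\subseteq\{\,3/4\le|z|\le 5/4\,\}$, so $\tfrac9{16}\le|z_n^2|\le\tfrac{25}{16}$ for all $n$.

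Now put $w_0=-\tfrac14$ and $w_{n+1}=Q(z_n,w_n)=w_n^2+\tfrac{w_n}{2}+Lz_n^2$, so $(z_n,w_n)=F^n(z,-\tfrac14)$. A direct computation gives $w_1=-\tfrac1{16}+Lz_0^2$, hence $|w_1|\ge\tfrac9{16}|L|-\tfrac1{16}$, while for every $n$, using $|z_n^2|\le\tfrac{25}{16}$,
$$|w_{n+1}|\ \ge\ |w_n|^2-\tfrac12|w_n|-\tfrac{25}{16}|L|.$$
I would then set $M(L)=\tfrac52\sqrt{|L|}$. For $|L|$ large one has $M(L)\ge 4$, and whenever $|w_n|\ge M(L)$ one gets $\tfrac12|w_n|\le\tfrac14|w_n|^2$ and $\tfrac{25}{16}|L|=\tfrac14 M(L)^2\le\tfrac14|w_n|^2$, so $|w_{n+1}|\ge\tfrac12|w_n|^2\ge\tfrac12 M(L)\,|w_n|\ge 2|w_n|$. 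Hence once a term reaches modulus $M(L)$ the moduli double at every subsequent step and $(z_n,w_n)\to\infty$. Finally, for $|L|$ sufficiently large one has $\tfrac9{16}|L|-\tfrac1{16}\ge\tfrac52\sqrt{|L|}$, so $|w_1|\ge M(L)$ and the orbit escapes; thus $(z,-\tfrac14)\in I$ and the Lemma follows.

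The only delicate point is bookkeeping: the argument imposes finitely many lower bounds on $|L|$ (and one must also respect condition $(*)$ of the preceding Lemma), and $L$ should be fixed once and for all to satisfy all of them simultaneously. Beyond this, the estimates are elementary — the key structural input is that $\partial U$ stays in the annulus $\{\,3/4\le|z|\le5/4\,\}$, which forces $|Lz_0^2|$ to be of order $|L|$ while the escape threshold $M(L)$ is only of order $\sqrt{|L|}$, so a single application of $Q$ already overshoots the escape region. I do not foresee a genuine obstacle.
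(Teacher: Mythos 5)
Your proof is correct, and the decisive estimate is the same one the paper uses: the only critical value is $w=-\tfrac14$, and for $z\in\partial U$ (so $|z|\ge\tfrac34$) a single application of $Q$ gives $|Q(z,-\tfrac14)|=|-\tfrac1{16}+Lz^2|\ge\tfrac9{16}L-\tfrac1{16}$. (The paper writes ``$w=1/2$'' in its proof, but the displayed inequality $\tfrac9{16}L-\tfrac1{16}$ matches $w=-\tfrac14$, so that is clearly a typo.)

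Where you diverge is in how you turn that one-step estimate into ``$(z,-\tfrac14)\notin\overline\Omega$.'' The paper simply compares $\tfrac9{16}L-\tfrac1{16}$ to the bound $B(L)$ produced by the preceding lemma: since $\overline\Omega\subset\{|w|\le B(L)\}$ and $F(\overline\Omega)\subset\overline\Omega$, it is enough that the image already leaves $\{|w|<B(L)\}$ after one iterate, which is exactly condition $(**)$. You instead re-derive the escape mechanism from scratch: you set up your own threshold $M(L)=\tfrac52\sqrt L$, show that the $w$-modulus doubles once it exceeds $M(L)$ (using $|z_n|\le\tfrac54$ because $\partial U$, hence $\overline U$, is forward invariant), and conclude $(z,-\tfrac14)$ lies in the open basin of infinity, disjoint from $\overline\Omega$. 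This is a longer but self-contained route that essentially re-proves the boundedness lemma inline. The trade-off shows up in the constants: your requirement $\tfrac9{16}L-\tfrac1{16}\ge\tfrac52\sqrt L$ forces $L\gtrsim20$, whereas the paper's pairing of $(*)$ and $(**)$ already works at $L=10$, $B(L)=5$. Since the lemma only requires \emph{some} admissible $L$, this is a harmless loss, as you yourself flag in your closing remark about fixing $L$ to satisfy all constraints simultaneously. The one caveat worth noting is that your characterization $I=\bigcup_n F^{-n}(\{\max(|z|,|w|)>R\})$ leans on the paper's Lemma~3.2, which implicitly requires $R$ large (depending on $L$); it is cleaner simply to observe, as the paper does, that leaving the forward-invariant region $\{|z|<\tfrac54,\ |w|<B(L)\}$ already rules out membership in $\overline\Omega$, with no need to track the orbit all the way to infinity.
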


			\begin{proof}
			If $z\in \partial U$ and $w=1/2,$ then
			$|Q(z,w)| \geq \frac{9}{16}L-1/16$.
			We need 
			$$
			(**)\frac{9}{16}L-1/16>B(L).
			$$
			\end{proof}
			
			We next look for a number $L$ so that both $(*)$ and $(**)$ are satisfied.
			We see that $L=10, B(L)=5$ works.

			\end{document}